\newtheorem{theorem}{Theorem}[section]
\theoremstyle{definition}
\newtheorem{definition}[theorem]{Definition}
\newtheorem{example}[theorem]{Example}
\newtheorem{proposition}[theorem]{Proposition}
\newtheorem{corollary}[theorem]{Corollary}
\newtheorem{lem}[theorem]{Lemma}
\theoremstyle{remark}
\numberwithin{equation}{section}
\begin{document}

\title{ On weakly $S$-primary hyperideals }

\author{Mahdi Anbarloei}
\address{Department of Mathematics, Faculty of Sciences,
Imam Khomeini International University, Qazvin, Iran.
}

\email{m.anbarloei@sci.ikiu.ac.ir}


\subjclass[2020]{  20N20, 16Y20  }


\keywords{  multiplicative set, weakly $n$-ary $S$-primary hyperideals, strongly weakly $n$-ary $S$-primary hyperideals.}

\begin{abstract}
 In this paper, our purpose is to introduce and study the notion of weakly $n$-ary $S$-primary hyperideals in a commutative Krasner $(m,n)$-hyperring.

\end{abstract}
\maketitle

\section{Introduction} 
Primary ideals are a significant concept in the study of commutative rings, providing a framework for comprehending the structure and properties of ideals in relation to prime ideals and other algebraic constructs. The notion of $S$-primary ideals extending  the concept of primary ideals in commutative rings by incorporating multiplicativ subsets,  was introduced and studied  in \cite{massaoud} and \cite{visweswaran}.
The notion of weakly $S$-primary ideals was proposed by Celikel and Khashan in \cite{Celikel}. The study often builds on previous work regarding weakly primary ideals and other related concepts.
Assume that $S$ is a multiplicative subset of a commutative ring $A$. An ideal $Q$ of $A$ is called weakly $S$-primary if there exists $s \in S$ such that for every $x,y \in A$ if $0 \neq xy \in Q$, then $sx \in Q$ or $sy \in rad(Q)$. Hyperstructures represent a natural extension of classical algebraic structures and they were
defined by the French mathematician F. Marty. In 1934, Marty \cite{s1} defined the concept of a hypergroup as a generalization of groups during the $8^{th}$ Congress of the Scandinavian Mathematicians. A comprehensive
review of the theory of hyperstructures can be found in \cite {amer2, s2, s3, davvaz1, davvaz2, s4, s10, jian}. The simplest algebraic hyperstructures which possess the properties of closure and associativity are called semihypergroups. 
$n$-ary semigroups and $n$-ary groups are algebras with one $n$-ary operation which is associative and invertible in a generalized sense. The notion of investigations of $n$-ary algebras goes back to Kasner’s lecture \cite{s5} at a scientific meeting in 1904. In 1928, Dorente wrote the first paper concerning the theory of $n$-ary groups \cite{s6}. Later on, Crombez and Timm \cite{s7, s8} defined and described the notion of the $(m, n)$-rings and their quotient structures. Mirvakili and Davvaz [20] defined $(m,n)$-hyperrings and obtained several results in this respect. In \cite{s9}, they introduced and illustrated a generalization of the notion of a hypergroup in the sense of Marty and a generalization of an $n$-ary group, which is called $n$-ary hypergroup.
The $n$-ary structures has been studied in \cite{l1, l2, l3, ma, rev1}. Mirvakili and Davvaz \cite{cons} defined $(m,n)$-hyperrings and obtained several results in this respect. It was Krasner, who introduced one important class of hyperrings, where the addition is a hyperoperation, while the multiplication is an ordinary binary operation, which is called Krasner hyperring. In \cite{d1}, a generalization of the Krasner hyperrings, which is a subclass of $(m,n)$-hyperrings, was defined by Mirvakili and Davvaz. It is called Krasner $(m,n)$-hyperring. Ameri and Norouzi in \cite{sorc1} introduced some important
hyperideals such as Jacobson radical, n-ary prime and primary hyperideals, nilradical, and n-ary multiplicative subsets of Krasner $(m, n)$-hyperrings. Afterward, the notions of $(k,n)$-absorbing hyperideals and $(k,n)$-absorbing primary hyperideals were studied by Hila et al. \cite{rev2}. 
Norouzi et al. proposed and analysed a new definition for normal hyperideals in Krasner $(m,n)$-hyperrings, with respect to that one given in \cite{d1} and they showed that these hyperideals correspond to strongly regular relations \cite{nour}. Asadi and Ameri introduced and studied direct limit of a direct system in the category of Krasner $(m,n)$-hyperrigs \cite{asadi}. 
Dongsheng defined the notion of $\delta$-primary ideals in a commutative ring where $\delta$ is a function that assigns to each ideal $I$ an ideal $\delta(I)$ of the same ring \cite{bmb2}. Moreover, in \cite{bmb3} he and his colleague investigated 2-absorbing $\delta$-primary ideals which unify 2-absorbing ideals and 2-absorbing primary
ideals. Ozel Ay et al. generalized the notion of $\delta$-primary on Krasner hyperrings \cite{bmb4}. The concept of $\delta$-primary hyperideals in Krasner $(m,n)$-hyperrings, which unifies the prime and primary hyperideals under one frame, was defined in \cite{mah3}. 

In this paper, the author aims to present the idea of weakly $n$-ary $S$-primary hyperideals in a commutative Krasner $(m,n)$-hyperring $A$ where $S$ is a multiplicative subset of $A$. Among many
results in this paper, we prove that if $Q$ is a weakly $n$-ary $S$-primary hyperideal of $A$, then $rad(Q)$ is an $n$-ary $S$-prime hyperideal of $A$ in Theorem \ref{bidi}.  In Theorem \ref{25}, we show that if $(Q : s)$  is a weakly $n$-ary primary hyperideal of $A$  for some $s \in S$, then $Q$ is a weakly $n$-ary $S$-primary hyperideal of $A$. Some characterizations of this concept are given on cartesian product of Krasner $(m,n)$-hyperrings  in Theorem \ref{cart1} and Theorem \ref{cart2}. We conclude that if $Q$ is a strongly weakly $n$-ary $S$-primary hyperideal of $A$ and   $g(Q^{(n)}) \neq 0$, then  $Q$ is an $n$-ary $S$-primary hyperideal of $A$ in Theorem \ref{28}. Furthermore, in Theorem \ref{29}, a type of Nakayama Lemma is considered for strongly weakly $n$-ary $S$-primary hyperideal of $A$. 
\section{Preliminaries}
Recall first the definitions and basic terms from the hyperrings theory. \\
Let $A$ be a non-empty set and $P^*(A)$ be the
set of all the non-empty subsets of $H$. Then the mapping $f : H^n \longrightarrow P^*(A)$
is called an $n$-ary hyperoperation and the algebraic system $(A, f)$ is called an $n$-ary hypergroupoid. For non-empty subsets $U_1,..., U_n$ of $H$, define $f(U^n_1) = f(U_1,..., U_n) = \bigcup \{f(x^n_1) \ \vert \ a_i \in U_i, i = 1,..., n \}$
The sequence $x_i, x_{i+1},..., x_j$ 
will be denoted by $x^j_i$. For $j< i$, $x^j_i$ is the empty symbol. Using this notation,
$f(x_1,..., x_i, y_{i+1},..., y_j, z_{j+1},..., z_n)$
will be written as $f(x^i_1, y^j_{i+1}, z^n_{j+1})$. The expression will be written in the form $f(x^i_1, y^{(j-i)}, z^n_{j+1}),$ when $y_{i+1} =... = y_j = y$. 
If for every $1 \leq i < j \leq n$ and all $x_1, x_2,..., x_{2n-1} \in A$, 
$f\bigg(x^{i-1}_1, f(x_i^{n+i-1}), x^{2n-1}_{n+i}\bigg) = f\bigg(x^{j-1}_1, f(x_j^{n+j-1}), x_{n+j}^{2n-1}\bigg)$
then the n-ary hyperoperation $f$ is called associative. An $n$-ary hypergroupoid with the
associative $n$-ary hyperoperation is called an $n$-ary semihypergroup. 
An $n$-ary hypergroupoid $(A, f)$ in which the equation $b \in f(a_1^{i-1}, x_i, a_{ i+1}^n)$ has a solution $x_i \in A$
for every $a_1^{i-1}, a_{ i+1}^n,b \in A$ and $1 \leq i \leq n$, is called an $n$-ary quasihypergroup, when $(A, f)$ is an $n$-ary
semihypergroup, $(A, f)$ is called an $n$-ary hypergroup. 
An $n$-ary hypergroupoid $(A, f)$ is commutative if for all $ \sigma \in \mathbb{S}_n$, the group of all permutations of $\{1, 2, 3,..., n\}$, and for every $a_1^n \in A$ we have $f(a_1,..., a_n) = f(a_{\sigma(1)},..., a_{\sigma(n)})$.
If $a_1^n \in A$, then $(a_{\sigma(1)},..., a_{\sigma(n)})$ is denoted by $a_{\sigma(1)}^{\sigma(n)}$.
If $f$ is an $n$-ary hyperoperation and $t = l(n- 1) + 1$, then $t$-ary hyperoperation $f_{(l)}$ is given by
$f_{(l)}(x_1^{l(n-1)+1}) = f\bigg(f(..., f(f(x^n _1), x_{n+1}^{2n -1}),...), x_{(l-1)(n-1)+1}^{l(n-1)+1}\bigg).$
\begin{definition}
\cite{d1} A non-empty subset $K$ of an $n$-ary hypergroup $(A, f)$ is said to be 
an $n$-ary subhypergroup of $A$ if $(K,f)$ is an $n$-ary hypergroup.
An element $e \in A$ is called a scalar neutral element if $x = f(e^{(i-1)}, x, e^{(n-i)})$, for every $1 \leq i \leq n$ and
for every $x \in A$. 

An element $0$ of an $n$-ary semihypergroup $(A, g)$ is called a zero element if for every $x^n_2 \in A$, 
$g(0, x^n _2) = g(x_2, 0, x^n_ 3) = ... = g(x^n _2, 0) = 0$.
If $0$ and $0^ \prime $ are two zero elements, then $0 = g(0^ \prime , 0^{(n-1)}) = 0 ^ \prime$ and so the zero element is unique. 
\end{definition}
\begin{definition}
\cite{l1} Let $(A, f)$ be a $n$-ary hypergroup. $(A, f)$ is called a canonical $n$-ary
hypergroup if:\\
(1) There exists a unique $e \in A$, such that for every $x \in A, f(x, e^{(n-1)}) = x$;\\
(2) For all $x \in A$ there exists a unique $x^{-1} \in A$, such that $e \in f(x, x^{-1}, e^{(n-2)})$;\\
(3) If $x \in f(x^n _1)$, then for all $i$, $x_i \in f(x, x^{-1},..., x^{-1}_{ i-1}, x^{-1}_ {i+1},..., x^{-1}_ n)$.

$e$ is said to be the scalar identity of $(A, f)$ and $x^{-1}$ is the inverse of $x$. Notice that the inverse of $e$ is $e$.
\end{definition}
\begin{definition}
\cite{d1} $(A, f, g)$, or simply $A$, is said to be a Krasner $(m, n)$-hyperring if:\\
(1) $(A, f$) is a canonical $m$-ary hypergroup;\\
(2) $(A, g)$ is a $n$-ary semigroup;\\
(3) The $n$-ary operation $g$ is distributive with respect to the $m$-ary hyperoperation $f$ , i.e., for every $a^{i-1}_1 , a^n_{ i+1}, x^m_ 1 \in A$, and $1 \leq i \leq n$,
\[g\bigg(a^{i-1}_1, f(x^m _1 ), a^n _{i+1}\bigg) = f\bigg(g(a^{i-1}_1, x_1, a^n_{ i+1}),..., g(a^{i-1}_1, x_m, a^n_{ i+1})\bigg);\]
(4) $0$ is a zero element (absorbing element) of the $n$-ary operation $g$, i.e., for every $x^n_ 2 \in R$ , 
$g(0, x^n _2) = g(x_2, 0, x^n _3) = ... = g(x^n_ 2, 0) = 0$.
\end{definition}
Throughout this paper, $(A, f, g)$ denotes a commutative Krasner $(m,n)$-hyperring.

A non-empty subset $B$ of $A$ is called a subhyperring of $A$ if $(B, f, g)$ is a Krasner $(m, n)$-hyperring. 
The non-empty subset $Q$ of $(A,f,g)$ is a hyperideal if $(Q, f)$ is an $m$-ary subhypergroup
of $(A, f)$ and $g(x^{i-1}_1, Q, x_{i+1}^n) \subseteq Q$, for every $x^n _1 \in A$ and $1 \leq i \leq n$.

\begin{definition} \cite{sorc1} Let $x$ be an element in a Krasner $(m,n)$-hyperring $A$. The hyperideal generated by $x$ is denoted by $<x>$ and defined as $<x>=g(A,x,1^{(n-2)})=\{g(r,x,1^{(n-2)}) \ \vert \ r \in A\}.$
Also, let $Q$ be a hyperideal of $A$. Then define $(Q:x)=\{a \in A \ \vert \ g(a,x,1_A^{(n-2)}) \in Q\}$. 
\end{definition}

\begin{definition} \cite{sorc1}
An element $x \in A$ is said to be invertible if there exists $y \in A$ such that $1_A=g(x,y,1_A^{(n-2)})$. Also,
the subset $U$ of $A$ is invertible if and only if every element of $U$ is invertible.
\end{definition}
\begin{definition}
\cite{sorc1} Let $Q \neq A$ be a hyperideal of a Krasner $(m, n)$-hyperring $A$. $Q$ is a prime hyperideal if for hyperideals $Q_1,..., Q_n$ of $A$, $g(Q_1^ n) \subseteq P$ implies that $Q_1 \subseteq Q$ or $Q_2 \subseteq Q$ or ...or $Q_n \subseteq Q$.
\end{definition}
\begin{lem} 
(Lemma 4.5 in \cite{sorc1}) Let $Q\neq A$ be a hyperideal of a Krasner $(m, n)$-hyperring $A$. Then $Q$ is a prime hyperideal if for all $x^n_ 1 \in A$, $g(x^n_ 1) \in Q$ implies that $x_1 \in Q$ or ... or $x_n \in Q$. 
\end{lem}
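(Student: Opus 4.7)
The plan is to prove the equivalence between the hyperideal-theoretic primeness (as given in the Definition) and the element-wise condition stated in the lemma. The reverse implication (element condition implies the hyperideal definition) is the easier direction, while the forward implication (hyperideal condition implies element condition) is where the structural work happens.

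For the direction assuming the element-wise condition, suppose $Q_1,\dots,Q_n$ are hyperideals of $A$ with $g(Q_1^n)\subseteq Q$. Assume toward contradiction that no $Q_i$ is contained in $Q$. Then for each $i\in\{1,\dots,n\}$ I can pick some $x_i\in Q_i\setminus Q$. Since $g(x_1^n)\in g(Q_1^n)\subseteq Q$, the hypothesized element-wise condition forces $x_i\in Q$ for some index $i$, contradicting the choice of the $x_i$. Hence some $Q_i\subseteq Q$, so $Q$ is prime in the hyperideal sense.

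For the converse, assume $Q$ is prime in the hyperideal sense and suppose $g(x_1^n)\in Q$. Using the principal hyperideals $\langle x_i\rangle=g(A,x_i,1_A^{(n-2)})$ from the earlier Definition, I would show that $g(\langle x_1\rangle,\dots,\langle x_n\rangle)\subseteq Q$. A general element here has the form $g\bigl(g(a_1,x_1,1_A^{(n-2)}),\dots,g(a_n,x_n,1_A^{(n-2)})\bigr)$ with $a_i\in A$. Using commutativity of $g$ together with the distributivity/associativity built into the $(m,n)$-hyperring structure (rearranging so that the $x_i$ are grouped and the $a_i,1_A$ are grouped), this element belongs to the set of values $g\bigl(g(a_1^n,1_A^{(\text{fill})}),g(x_1^n),1_A^{(n-2)}\bigr)$, which lies in $Q$ since $g(x_1^n)\in Q$ and $Q$ is a hyperideal absorbing outside factors. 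Hence $g(\langle x_1\rangle,\dots,\langle x_n\rangle)\subseteq Q$. By the hyperideal definition of primeness, some $\langle x_i\rangle\subseteq Q$, and since $x_i=g(1_A,x_i,1_A^{(n-2)})\in\langle x_i\rangle$, I conclude $x_i\in Q$, as required.

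The main obstacle is the careful rearrangement in the converse direction: the $n$-ary operation $g$ is associative only in the generalized sense via $g_{(l)}$, so reducing $g\bigl(g(a_1,x_1,1_A^{(n-2)}),\dots,g(a_n,x_n,1_A^{(n-2)})\bigr)$ to an expression where the factor $g(x_1^n)$ appears explicitly (and can then be absorbed by $Q$) requires invoking commutativity of $g$ and the generalized associative law several times. Once this identity is established, the rest of the argument is routine: use that $Q$ is a hyperideal to absorb the remaining factors, apply the primeness of $Q$ in the hyperideal sense, and exploit that $x_i\in\langle x_i\rangle$.
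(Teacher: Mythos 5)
Your proof is correct, and it is essentially the canonical argument: this paper states the lemma without proof, citing Lemma 4.5 of \cite{sorc1}, where precisely your two steps appear (choosing representatives $x_i\in Q_i\setminus Q$ for one direction, and passing to the principal hyperideals $\langle x_i\rangle$ with generalized associativity, commutativity and absorption for the other). The only points worth tightening are the arity bookkeeping in the converse --- the flattened product $g\bigl(g(a_1,x_1,1_A^{(n-2)}),\dots,g(a_n,x_n,1_A^{(n-2)})\bigr)$ has $n^2=(n+1)(n-1)+1$ entries, so it is $g_{(n+1)}$ of the $a_i$'s, the $x_i$'s and $n^2-2n$ copies of $1_A$, which regroups as $g\bigl(g(a_1^n),g(x_1^n),1_A^{(n-2)}\bigr)\in Q$ (here $g$ is an ordinary operation, so no hyperoperation subtleties arise) --- and the observation that the $\langle x_i\rangle$ direction tacitly requires the scalar identity $1_A$, whereas the implication actually asserted in the lemma as quoted (the element condition implies primeness) is just your first paragraph and needs no identity.
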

\begin{definition} \cite{sorc1} Let $Q$ be a hyperideal in a Krasner $(m, n)$-hyperring $A$ with
scalar identity. The radical (or nilradical) of $Q$, denoted by $rad(Q)$
is the hyperideal $\bigcap Q$, where
the intersection is taken over all prime hyperideals $Q$ which contain $Q$. If the set of all prime hyperideals containing $Q$ is empty, then $rad(Q)$ is defined to be $A$.
\end{definition}
Ameri and Norouzi showed that if $x \in rad(Q)$ then 
there exists $u \in \mathbb {N}$ such that $g(x^ {(u)} , 1_A^{(n-u)} ) \in Q$ for $u \leq n$, or $g_{(l)} (x^ {(u)} ) \in Q$ for $u = l(n-1) + 1$ \cite{sorc1}.
\begin{definition}
\cite{sorc1} Let $Q$ be a proper hyperideal of a Krasner $(m, n)$-hyperring $A$ with the scalar identity $1_A$. $Q$ is a primary hyperideal if $g(x^n _1) \in Q$ and $x_i \notin Q$ implies that $g(x_1^{i-1}, 1_A, x_{ i+1}^n) \in rad(Q)$ for some $1 \leq i \leq n$.
\end{definition}
If $Q$ is a primary hyperideal in a Krasner $(m, n)$-hyperring $A$ with the scalar identity $1_A$, then $rad(Q)$ is prime. (Theorem 4.28 in \cite{sorc1})
\begin{definition} \cite{sorc1}
Let $S \subseteq A$ be a non-empty subset of a Krasner $(m,n)$-hyperring $R$. $S$ is called an n-ary multiplicative if $g(s_1^n) \in S$ for $s_1,...,s_n \in S$.
\end{definition}

\section{weakly $n$-ary $S$-primary hyperideals}
In  \cite{mah6}, the notion of $n$-ary primary hyperideals was extended to $n$-ary $S$-primary hyperideals via an $n$-ary multiplicative subset  in a  Krasner $(m,n)$-hyperring. Let $A$ be a commutative Krasner $(m,n)$-hyperring, $S \subseteq A$ an $n$-ary multiplicative set   and $Q$  a hyperideal of  $A$ with $Q \cap S=\varnothing$. The hyperideal $Q$ refers to an $n$-ary $S$-primary hyperideal  if there exists an $s \in S$ such that for every $a_1^n \in A$ with $ g(a_1^n) \in Q$,  we have $g(s,a_i,1_A^{(n-2)}) \in Q$ or $g(a_1^{i-1},s,a_{i+1}^n) \in rad(Q)$ for some $i \in \{1,\cdots,n\}$.  Now, we  introduce and investigate  the concept of weakly $n$-ary $S$-primary hyperideals which constitutes a generalization of $n$-ary $S$-primary hyperideals. The weakly-version of $n$-ary $S$-primary hyperideals is defined as follows. 
\begin{definition}
Let $Q$ be a hyperideal of  $A$ and $S$ be an $n$-ary multiplicative subset of $A$ satisfying $Q \cap S=\varnothing$.  $Q$ refers to a weakly $n$-ary $S$-primary hyperideal  if there exists an $s \in S$ such that for all $a_1^n \in A$ with $0 \neq g(a_1^n) \in Q$, we have $g(s,a_i,1_A^{(n-2)}) \in Q$ or $g(a_1^{i-1},s,a_{i+1}^n) \in rad(Q)$ for some $i \in \{1,\cdots,n\}$.  
\end{definition}
\begin{example} \label{hanieh2}
Consider  the Krasner $(2,3)$-hyperring $(A=[0,1],\boxplus,\cdot )$ where the 2-ary hyperoperation $``\boxplus"$  is defined as $p \boxplus q=\{\max\{p, q\}\}$ if $p \neq q$  or $[0,p]$ if $p =q.$
Also, $``\cdot"$ is the usual multiplication on real numbers. Let $S=(0.0.1] \cup \{1\}$. Then  the hyperideal $Q=[0,0.5]$ is a weakly 3-ary $S$-primary hyperideal of $A$. 
\end{example}
\begin{example} \label{hanieh}
The construction $A=\mathbb{Z}_{12}/\mathbb{Z}_{12}^*$ is a Krasner hyperring where $\mathbb{Z}_{12}^*=\{1,5,7,11\}$ is a multiplicative
group of units  of $\mathbb{Z}_{12}=\{0,1,2,...,11\}$ by Example 3.2  \cite{rev2}. In the hyperring, the set $S=\{\mathbb{Z}_{12}, 3\mathbb{Z}_{12},9\mathbb{Z}_{12}\}$ is  3-ary multiplicative. The hyperideal $Q=\{0\mathbb{Z}_{12},4\mathbb{Z}_{12}\}$ with $\sqrt{Q}=\{0\mathbb{Z}_{12}^*,2\mathbb{Z}_{12}^*,4\mathbb{Z}_{12}^*, 6\mathbb{Z}_{12}^*\}$ is a weakly 3-ary $S$-primary hyperideal of $A$. 
\end{example}
Let $S \subseteq A$ is an $n$-ary multiplicative set. Recall from \cite{mah7} that a hyperideal $Q$ is called weakly $n$-ary $s$-prime if there exists $s \in S$ such that, whenever, $x_1^n \in A$, $0 \neq g(x_1^n) \in Q$ implies $g(s,x_i,1_A^{(n-2)}) \in Q$ for some $i \in \{1,\cdots,n\}$. Every weakly $n$-ary $S$-prime hyperideal is weakly $S$-primary. However, the converse is not true in general.
\begin{example}
In Example \ref{hanieh2}, The  hyperideal $Q=[0,0.5]$ is a weakly 3-ary $S$-primary hyperideal of $A$ where $S=\{1\}$. However, since $0 \neq g(0.6,0.7,0.8) \in Q$ but $0.6,0.7,0.8 \notin Q$, we get the result that  $Q$ is not weakly 3-ary $S$-prime hyperideal of $A$.
\end{example}
\begin{proposition}\label{21}
Let $Q$ and $P$ be  hyperideals of $A$ and $S \subseteq  A$ be an $n$-ary multiplicative set with  $Q \cap S=\varnothing$.  If  $Q$ is a weakly $n$-ary $S$-primary hyperideal of $A$ and  $P \cap S \neq \varnothing $, then $Q \cap P$ is a weakly $n$-ary $S$-primary hyperideal of $A$.
\end{proposition}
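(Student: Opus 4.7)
The plan is to use a standard trick for taking intersections with hyperideals that meet the multiplicative set: boost the witness $s_1 \in S$ for $Q$ by multiplying it by an element $s_2 \in P \cap S$. Specifically, I would set $s = g(s_1, s_2^{(n-1)})$, which lies in $S$ because $S$ is $n$-ary multiplicative, and simultaneously lies in $P$ because $s_2 \in P$ and $P$ is a hyperideal. This single element $s$ will serve as the common witness for $Q \cap P$.

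Next I would verify the disjointness hypothesis: since $Q \cap S = \varnothing$, we automatically get $(Q \cap P) \cap S = \varnothing$. Then, suppose $a_1^n \in A$ with $0 \neq g(a_1^n) \in Q \cap P$. Apply the weakly $n$-ary $S$-primary hypothesis for $Q$ with witness $s_1$ to get some index $i$ with either $g(s_1, a_i, 1_A^{(n-2)}) \in Q$ or $g(a_1^{i-1}, s_1, a_{i+1}^n) \in rad(Q)$.

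In the first case, I want to show $g(s, a_i, 1_A^{(n-2)}) \in Q \cap P$. Membership in $P$ is immediate from $s \in P$ and the hyperideal property of $P$. Membership in $Q$ follows from a careful reassociation: by commutativity and $n$-ary associativity applied to the $(2n-1)$-tuple $(s_1, s_2^{(n-1)}, a_i, 1_A^{(n-2)})$, one shows
\[ g(s, a_i, 1_A^{(n-2)}) = g\bigl(s_2^{(n-1)}, g(s_1, a_i, 1_A^{(n-2)})\bigr), \]
which is in $Q$ because $g(s_1, a_i, 1_A^{(n-2)}) \in Q$ and $Q$ is a hyperideal. In the second case, the identical reassociation gives
\[ g(a_1^{i-1}, s, a_{i+1}^n) = g\bigl(s_2^{(n-1)}, g(a_1^{i-1}, s_1, a_{i+1}^n)\bigr) \in rad(Q), \]
and on the other hand $g(a_1^{i-1}, s, a_{i+1}^n) \in P \subseteq rad(P)$ because $s \in P$. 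The equality $rad(Q) \cap rad(P) = rad(Q \cap P)$ (proved by picking a sufficiently large iterated $g_{(l)}$-power that lies in both $Q$ and $P$) then places this element in $rad(Q \cap P)$, completing the argument.

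The main obstacle is purely bookkeeping: correctly applying $n$-ary associativity and commutativity to regroup the $(2n-1)$-tuple so that the inner $g$ captures $s_1$ together with the $a_i$ (or the remaining $a_j$'s), while the outer $g$ absorbs the $s_2$'s into the hyperideal. Once that identity is established, each containment is a one-line appeal to the hyperideal property and to the known fact that $rad$ of an intersection of hyperideals equals the intersection of their radicals.
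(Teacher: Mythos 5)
Your proposal is correct and follows essentially the same route as the paper: the paper likewise takes the witness $s$ for $Q$ and an element $t\in P\cap S$, forms the new witness $g(t^{(n-1)},s)\in S$, and uses the same reassociation $g(g(t^{(n-1)},s),a_i,1_A^{(n-2)})=g(t^{(n-1)},g(s,a_i,1_A^{(n-2)}))$ together with $rad(Q)\cap rad(P)=rad(Q\cap P)$ to conclude. Your version is, if anything, slightly more careful in fixing the witness for $Q$ up front and in spelling out why membership in $P$ holds.
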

\begin{proof}
It is clear that  $(Q \cap P) \cap S=\varnothing$. Assume that  $a_1^n \in A$ such that  $0 \neq g(a_1^n) \in Q \cap P$. Since $Q$ is a weakly $n$-ary $S$-primary  hyperideal of $A$ and $0 \neq g(a_1^n) \in Q $, we conclude that there exists an $s \in S$ such that $g(s,a_i,1_A^{(n-2)}) \in Q$  or $g(a_1^{i-1},s,a_{i+1}^n) \in rad(Q)$ for some $i \in \{1,\cdots,n\}$. Now, take any $t \in P \cap S$. So $g(t^{(n-1)},s) \in S$. Then  $g(g(t^{(n-1)},s),a_i,1_A^{(n-2)})=g(t^{(n-1)},g(s,a_i,1_A^{(n-2)})) \in Q \cap P$ or $g(a_1^{i-1},g(t^{(n-1)},s),a_{i+1}^n)=g(g(t^{(n-1)}, g(a_1^{i-1},s,a_{i+1}^n)) \in rad(Q) \cap rad(P)=rad(Q \cap P)$. This means that  $Q \cap P$ is a weakly $n$-ary $S$-primary hyperideal of $A$.
\end{proof}
\begin{proposition} \label{22}
Let $Q_1^{n-1}$ and $Q$ be some hyperideals of $A$ and $S \subseteq  A$ be an $n$-ary multiplicative set such that $Q \cap S=\varnothing$. If $Q$ is a weakly $n$-ary $S$-primary hyperideal of $A$ and  $Q_j \cap S \neq \varnothing $ for each $j \in \{1,\cdots,n-1\}$, then $g(Q_1^{n-1},Q)$ is a weakly $n$-ary $S$-primary hyperideal of $A$.
\end{proposition}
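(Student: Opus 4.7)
The plan is to mimic the proof strategy of Proposition \ref{21}, replacing the intersection $Q \cap P$ by the $n$-ary product $g(Q_1^{n-1}, Q)$. First I would verify that $g(Q_1^{n-1}, Q) \cap S = \varnothing$: since $Q$ is a hyperideal, the distributive/absorbing properties give $g(Q_1^{n-1}, Q) \subseteq Q$, and by hypothesis $Q \cap S = \varnothing$. Next, suppose $a_1^n \in A$ with $0 \neq g(a_1^n) \in g(Q_1^{n-1}, Q) \subseteq Q$. Applying the weakly $n$-ary $S$-primary property of $Q$, I obtain an $s \in S$ (independent of the $a_i$) such that either $g(s,a_i,1_A^{(n-2)}) \in Q$ or $g(a_1^{i-1},s,a_{i+1}^n) \in rad(Q)$ for some $i$. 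I would then fix a witness $t_j \in Q_j \cap S$ for each $j \in \{1,\dots,n-1\}$ and form $u = g(t_1,\dots,t_{n-1},s)$, which lies in $S$ by $n$-ary multiplicativity; this $u$ will serve as the witness for $g(Q_1^{n-1},Q)$.

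In the first case, associativity and commutativity of $g$ let me rewrite
\[
g(u,a_i,1_A^{(n-2)}) = g\bigl(t_1,\dots,t_{n-1},\, g(s,a_i,1_A^{(n-2)})\bigr),
\]
which belongs to $g(Q_1^{n-1},Q)$ since $t_j \in Q_j$ and $g(s,a_i,1_A^{(n-2)}) \in Q$. This is exactly parallel to the manipulation in Proposition \ref{21}.

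The main obstacle is the second case, where I must show
\[
g(a_1^{i-1},u,a_{i+1}^n) = g\bigl(t_1,\dots,t_{n-1},\, g(a_1^{i-1},s,a_{i+1}^n)\bigr) \in rad\bigl(g(Q_1^{n-1},Q)\bigr).
\]
For this, I would establish the auxiliary containment $g(Q_1^{n-1}, rad(Q)) \subseteq rad(g(Q_1^{n-1},Q))$. Writing $r = g(a_1^{i-1},s,a_{i+1}^n) \in rad(Q)$, the characterization of $rad$ from \cite{sorc1} recalled in the preliminaries gives an index $k$ such that an appropriate $n$-ary power of $r$ lies in $Q$ (either $g(r^{(k)},1_A^{(n-k)}) \in Q$ for $k\le n$, or $g_{(l)}(r^{(k)}) \in Q$ for $k=l(n-1)+1$). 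Using commutativity, the same power of $g(t_1,\dots,t_{n-1},r)$ distributes as an expression of the form $g(t_1^{(k)},\dots,t_{n-1}^{(k)},r^{(k)})$ (with $1_A$'s padded if necessary), and since each $t_j \in Q_j$ keeps the $t_j$-coordinate inside the hyperideal $Q_j$ while $r^{(k)} \in Q$, the whole expression lies in $g(Q_1^{n-1},Q)$. Hence $g(t_1,\dots,t_{n-1},r) \in rad(g(Q_1^{n-1},Q))$, completing the second case and the proof.
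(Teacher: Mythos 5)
Your proposal is correct and follows essentially the same route as the paper's proof: fix the witness $s$ for $Q$, multiply it by elements $t_j\in Q_j\cap S$ to get the new witness in $S$, handle the first case by reassociating $g$, and handle the radical case by raising $g(t_1,\dots,t_{n-1},r)$ to the power that puts $r$ into $Q$ and distributing that power over the factors. The only (cosmetic) difference is that you isolate the containment $g(Q_1^{n-1},rad(Q))\subseteq rad(g(Q_1^{n-1},Q))$ as an explicit auxiliary step, which the paper proves inline by the same computation.
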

\begin{proof}
Clearly, $g(Q_1^{n-1},Q) \cap S=\varnothing$.  Assume that $a_1^n \in A$ such that $0 \neq g(a_1^n) \in g(Q_1^{n-1},Q)$. Since $0 \neq g(a_1^n) \in Q $ and $Q$ is a weakly $n$-ary $S$-primary hyperideal of $A$, there exists an $s \in S$ such that $g(s,a_i,1_A^{(n-2)}) \in Q$ or $g(a_1^{i-1},s,a_{i+1}^n) \in rad(Q)$ for some $i \in \{1,\cdots,n\}$. Now, take  $t_j \in  Q_j \cap S$ for any $j \in \{1,\cdots,n-1\}$. Therefore we get  $g(t_1^{n-1},s) \in S$. If $g(s,a_i,1_A^{(n-2)}) \in Q$, then  $g(g(t_1^{n-1},s),a_i,1_A^{n-2})=g(t_1^{n-1},g(s,a_i,1_A^{(n-2)})) \in g(Q_1^{n-1},Q)$. If $g(a_1^{i-1},s,a_{i+1}^n) \in rad(Q)$, then there exists $u \in \mathbb{N}$ such that $g(g(a_1^{i-1},s,a_{i+1}^n)^{(u)},1_A^{(n-u)}) \in Q$ or $g_{(l)}(g(a_1^{i-1},s,a_{i+1}^n)^{(u)}) \in Q$ for $u=l(n-1)+1$. In the first case, we get  $g(g(a_1^{i-1},g(t_1^{n-1},s),a_{i+1}^n)^{(u)},1_A^{(n-u)})=g(g(t_1^{(u)},1_A^{(n-u)}),\cdots,g(t_{n-1}^{(u)},1_A^{(n-u)}),g(a_1^{i-1},s,a_{i+1}^n)^{(u)},1_A^{(n-u)})) \in g(Q_1^{n-1},Q)$ which implies $g(a_1^{i-1},g(t_1^{n-1},s),a_{i+1}^n) \in rad(g(Q_1^{n-1},Q))$.  Consequently,  $g(Q_1^{n-1},Q)$ is a weakly $n$-ary $S$-primary hyperideal of $A$. In the second case, a similar argument
completes the proof.
\end{proof}
Let $S \subseteq A$ be an $n$-ary multiplicative set satisfying  $S \cap Q=\varnothing$. Recall from \cite{mah6} that  a hyperideal of $Q$ of $A$ is called $n$-ary $S$-prime if there exists an $s \in S$ such that for every $x_1^n \in A$ with $ g(x_1^n) \in Q$,  we get $g(s,x_i,1_A^{(n-2)}) \in Q$ for some $i \in \{1,\cdots,n\}$.
\begin{example}
In Example \ref{hanieh}, the weakly 3-ary $S$-primary  hyperideal $Q$ is not a 3-ary $S$-prime hyperideal of $A$. Because, $g((2\mathbb{Z}_{12}^*)^{(2)},3\mathbb{Z}_{12}^*) \in Q$ but neither $g(2\mathbb{Z}_{12}^*,s,1) \in Q$ nor $g(3\mathbb{Z}_{12}^*,s,1) \in Q$ for all $s \in S$.
\end{example}
\begin{theorem} \label{bidi}
Let $S \subseteq A$ be an $n$-ary multiplicative set and  $\{0\}$ be an $n$-ary $S$-primary hyperideal of $A$. If $Q$ is a weakly $n$-ary $S$-primary hyperideal of $A$, then $rad(Q)$ is an $n$-ary $S$-prime hyperideal of $A$. 
\end{theorem}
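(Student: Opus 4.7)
The plan is to fix a witness $s_1 \in S$ for $Q$ being weakly $n$-ary $S$-primary and a witness $s_2 \in S$ for $\{0\}$ being $n$-ary $S$-primary, and then to exhibit a single $s_0 \in S$ (for instance $s_0 = g(s_2, s_1^{(n-1)})$, which lies in $S$ by $n$-ary multiplicativity) that witnesses $rad(Q)$ being $n$-ary $S$-prime. In preparation I would invoke the $S$-analogue of Theorem 4.28 of \cite{sorc1} (established in \cite{mah6}) to conclude that $rad(\{0\})$ is already an $n$-ary $S$-prime hyperideal with a witness derivable from $s_2$, and also observe that $S \cap rad(\{0\}) = \varnothing$: otherwise a suitable power of some $s \in S$ of the form $l(n-1)+1$ would equal $0$ and so sit in $\{0\} \subseteq Q$, violating $Q \cap S = \varnothing$.

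Given $a_1^n \in A$ with $g(a_1^n) \in rad(Q)$, I would split on whether $g(a_1^n)$ is nilpotent. If $g(a_1^n) \in rad(\{0\})$, the $S$-prime property of $rad(\{0\})$ gives some $i$ with $g(s_2, a_i, 1_A^{(n-2)}) \in rad(\{0\}) \subseteq rad(Q)$, and since $s_0$ is a multiple of $s_2$ in $A$ and $rad(Q)$ is a hyperideal, we obtain $g(s_0, a_i, 1_A^{(n-2)}) \in rad(Q)$. Otherwise, some nonzero power $g(a_1^n)^u$ lies in $Q$; rewriting this as $g(b_1^n)$ with $b_i = a_i^u$ (built via $g(a_i^{(u)}, 1_A^{(n-u)})$ for $u \leq n$ or via $g_{(l)}(a_i^{(u)})$ for $u = l(n-1)+1$), the weakly $n$-ary $S$-primary property of $Q$ yields some $i$ for which either $s_1 a_i^u \in Q$ or $s_1 \prod_{j \neq i} a_j^u \in rad(Q)$. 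In the first alternative, multiplying through by $s_1^{u-1}$ and using that $rad(Q)$ is a radical hyperideal gives $(s_1 a_i)^u \in rad(Q)$, hence $s_1 a_i \in rad(Q)$, and therefore $g(s_0, a_i, 1_A^{(n-2)}) \in rad(Q)$.

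The main obstacle is the second alternative. The same radical extraction only yields $s_1 \prod_{j \neq i} a_j \in rad(Q)$, an element that still involves $n-1$ of the variables, so no single $a_j$ has been isolated. My plan is to iterate the case analysis on this new element of $rad(Q)$: if a power of it is $0$, feed it into the nilpotent branch (the disjointness $S \cap rad(\{0\}) = \varnothing$ prevents the $s_1$ factor from being spuriously isolated, forcing an $a_{j_k}$ to be isolated instead); otherwise take a nonzero power and decompose it as an $n$-tuple $g(d_1, \dots, d_n)$ whose slots are mixed products of the form $s_1 a_{j_k}$ rather than a single pure power of $s_1$. Each round of the weakly $S$-primary property then either gives an $(\alpha)$-type outcome that directly yields $s_1 a_{j^\ast} \in rad(Q)$, or a $(\beta)$-type outcome that drops one further $a_j$ from the surviving product; the degenerate ``pure $s_1$-power isolated'' outcome is ruled out because a sufficiently high power of $s_1$ would then sit in $Q \cap S$. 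Since the count of surviving $a_j$ factors is a non-negative integer that strictly decreases on each non-terminating round, the process halts after at most $n-1$ rounds with some $s_1 a_{i^\ast} \in rad(Q)$, giving $g(s_0, a_{i^\ast}, 1_A^{(n-2)}) \in rad(Q)$ and completing the proof.
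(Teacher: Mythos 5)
Your proposal is correct and follows essentially the same route as the paper's proof: pass to a power of $g(a_1^n)$ lying in $Q$, split according to whether that power is zero, apply the weakly $n$-ary $S$-primary property of $Q$ in the nonzero branch and the $n$-ary $S$-primary property of $\{0\}$ in the zero branch, and then iteratively peel off the remaining factors until a single $a_i$ is isolated. The only substantive differences are organizational: you pin down a single uniform witness $s_0=g(s_2,s_1^{(n-1)})$ explicitly (the paper leaves the uniformity of the witness implicit by phrasing its contrapositive over all $s\in S$), and you outsource the nilpotent branch to the fact that $rad(\{0\})$ is $n$-ary $S$-prime, whereas the paper reruns the same peeling chain with $\{0\}$ in place of $Q$.
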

\begin{proof}
Since $S \cap Q = \varnothing$,   we conclude that $S \cap rad(Q) =\varnothing$. Let $a_1^n \in A$ such that $g(a_1^n) \in rad(Q)$ and  $g(s,a_j,1^{(n-2)}) \notin rad(Q)$ for every $j \in \{1,\cdots,\widehat{i},\cdots,n\}$ and for all $s \in S$. Since $g(a_1^n) \in rad(Q)$, there exists $u \in \mathbb{N}$ such that $g(g(a_1^n)^{(u)},1_A^{(n-u)}) \in Q$ for $u \leq n$ or $g_{(l)}(g(a_1^n)^{(u)}) \in Q$ for $u=l(n-1)+1$. In the first possibilty, let  $0 \neq g(g(a_1^n)^{(u)},1_A^{(n-u)})=g(a_i^{(u)},g(g(a_1^{i-1},1_A,a_{i+1}^n)^{(u)},1_A^{(n-u)}),1_A^{(n-u-1)})$. Then there exists $s \in S$ such that $g(s,g(a_1^{i-1},1_A,a_{i+1}^n)^{(u)},1_A^{(n-u-1)}) \in Q$ or  $g(s,a_i^{(u)},1_A^{(n-u-1)}) \in rad(Q)$  as $Q$ is a weakly $n$-ary $S$-primary hyperideal of $A$.  If  the first case, we get\\ 

$\hspace{0.7cm}g(a_1^{(u)},g(a_2^{i-1},s,1_A,x_{i+1}^n)^{(u)},1^{(n-2u)}) \in Q$

$\Longrightarrow g(g(a_1^{(u)},1_A^{(n-u)}),g(a_2^{i-1},s,1_A,a_{i+1}^n)^{(u)},1_A^{(n-u-1)}) \in Q$

$\Longrightarrow g(s,g(a_1^{(u)},1_A^{(n-u)}),1_A^{(n-2)}) \in Q \ \text{or}\ g(s,g(a_2^{i-1},s,1_A,a_{i+1}^n)^{(u)},1_A^{(n-u-1)}) \in$

$\hspace{0.7cm} rad(Q)$

$\Longrightarrow g(g(s,a_1,1_A^{(n-2)})^{(u)},1_A^{(n-u)}) \in Q \ \text{or}\ g(g(a_2^{i-1},s^{(2)},a_{i+1}^n)^{(u)},1_A^{(n-u)}) \in$

$\hspace{0.7cm}rad(Q)$

$\Longrightarrow g(s,a_1,1_A^{(n-2)}) \in rad(Q) \ \text{or}\ g(g(a_2^{i-1},s^{(2)},a_{i+1}^n)^{(u)},1_A^{(n-u)}) \in rad(Q)$.\\

Since $g(s,a_1,1^{(n-2)}) \in rad(Q)$ is impossible, then \\

$ \hspace{0.7cm}g(g(a_2^{i-1},s^{(2)},a_{i+1}^n)^{(u)},1_A^{(n-t)}) \in rad(Q)$

$\Longrightarrow \exists v \in \mathbb{N}; \  g(g(g(a_2^{i-1},s^{(2)},a_{i+1}^n)^{(u)},1_A^{(n-u)})^{(v)}, 1_A^{(n-v)}) \in Q$

$\Longrightarrow g(g(a_2^{(u+v)},1_A^{(n-u-v)}),g(g(a_3^{i-1},s^{(2)},1_A,a_{i+1}^n)^{(u)},1_A^{(n-v)})^{(v)},1_A^{(n-v-1)}) \in Q$

$\Longrightarrow g(s,a_2,1^{(n-2)}) \in rad(Q) \ \text{or} \ g(g(g(a_3^{i-1},s^{(3)},a_{i+1}^n)^{(u)},1_A^{(n-u)})^{(v)},1_A^{(n-v)}) \in $

$\hspace{0.7cm}rad(Q)$

$\vdots$

$\Longrightarrow \cdots \ \text{or} \ g(s,a_n,1^{(n-2)}) \in rad(Q)$\\
which is contradiction with the fact that $g(s,x_j,1^{(n-2)}) \notin rad(Q)$ for all $j \in \{1,\cdots,\widehat{i},\cdots,n\}$. In the second case,  we get $g(g(s,a_i,1_A^{(n-2)})^{(u)},1_A^{(n-u)}) \in Q$ which implies $g(s,a_i,1_A^{(n-2)}) \in rad(Q)$, as needed. Now, let $0= g(g(a_1^n)^{(u)},1_A^{(n-u)})=g(a_i^{(u)},g(g(a_1^{i-1},1_A,a_{i+1}^n)^{(u)},1_A^{(n-u)}),1_A^{(n-u-1)})$. Then there exists $t \in S$ such that $g(s,g(a_1^{i-1},1_A,a_{i+1}^n)^{(u)},1_Q^{(n-u-1)})=0$ or  $g(s,a_i^{(u)},1_A^{(n-u-1)}) \in rad(0)$  as $\{0\}$ is an $n$-ary $S$-primary hyperideal of $A$. In the first case, we conclude that \\

$\hspace{0.7cm}g(a_1^{(u)},g(a_2^{i-1},s,1_A,x_{i+1}^n)^{(u)},1^{(n-2u)}) = 0$

$\Longrightarrow g(g(a_1^{(u)},1_A^{(n-u)}),g(a_2^{i-1},s,1_A,a_{i+1}^n)^{(u)},1_A^{(n-u-1)}) =0$

$\Longrightarrow g(s,g(a_1^{(u)},1_A^{(n-u)}),1_A^{(n-2)}) =0 \ \text{or}\ g(s,g(a_2^{i-1},s,1_A,a_{i+1}^n)^{(u)},1_A^{(n-u-1)}) \in$

$\hspace{0.7cm} rad(0)$

$\Longrightarrow g(g(s,a_1,1_A^{(n-2)})^{(u)},1_A^{(n-u)}) =0 \ \text{or}\ g(g(a_2^{i-1},s^{(2)},a_{i+1}^n)^{(u)},1_A^{(n-u)}) \in$

$\hspace{0.7cm}rad(0)$

$\Longrightarrow g(s,a_1,1_A^{(n-2)}) \in rad(0) \ \text{or}\ g(g(a_2^{i-1},s^{(2)},a_{i+1}^n)^{(u)},1_A^{(n-u)}) \in rad(0)$.\\

Since $g(s,a_1,1^{(n-2)}) \notin rad(Q)$, we get $g(s,a_1,1^{(n-2)}) \notin rad(0)$. Hence we have \\

$ \hspace{0.7cm}g(g(a_2^{i-1},s^{(2)},a_{i+1}^n)^{(u)},1_A^{(n-t)}) \in rad(0)$

$\Longrightarrow \exists v \in \mathbb{N}; \  g(g(g(a_2^{i-1},s^{(2)},a_{i+1}^n)^{(u)},1_A^{(n-u)})^{(v)}, 1_A^{(n-v)}) =0$

$\Longrightarrow g(g(a_2^{(u+v)},1_A^{(n-u-v)}),g(g(a_3^{i-1},s^{(2)},1_A,a_{i+1}^n)^{(u)},1_A^{(n-v)})^{(v)},1_A^{(n-v-1)}) =0$

$\Longrightarrow g(s,a_2,1^{(n-2)}) \in rad(Q) \ \text{or} \ g(g(g(a_3^{i-1},s^{(3)},a_{i+1}^n)^{(u)},1_A^{(n-u)})^{(v)},1_A^{(n-v)}) \in $

$\hspace{0.7cm}rad(0)$

$\vdots$

$\Longrightarrow \cdots \ \text{or} \ g(s,a_n,1^{(n-2)}) \in rad(0)$

which is impossible. In the second case,  we get $g(g(s,a_i,1_A^{(n-2)})^{(u)},1_A^{(n-u)}) \in rad(0)$ which implies $g(s,a_i,1_A^{(n-2)}) \in rad(Q)$.  Consequently, $rad(Q)$ is an $n$-ary $S$-prime hyperideal of $A$. In the second possibility,  by using a similar argument, one can easily get the result that $rad(Q)$ is an $n$-ary $S$-prime hyperideal of $A$.
\end{proof}
\begin{proposition} \label{23}
Let $Q_1^n$ be some hyperideals of $A$ and $S \subseteq  A$ be an $n$-ary multiplicative set such that $Q_j \cap S=\varnothing$ for each $j \in \{1,\cdots,n\}$. If $Q_j$ is a weakly $n$-ary $S$-primary hyperideal of $A$ for any $j \in \{1,\cdots,n\}$ such that $rad(Q_j)=rad(Q_k)$ for every $j,k \in \{1,\cdots,n\}$, then $\cap_{j=1}^n Q_j$ is a weakly $n$-ary $S$-primary hyperideal of $A$.
\end{proposition}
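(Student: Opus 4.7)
The plan is to mirror the strategy of Propositions \ref{21} and \ref{22}: combine the individual witnesses $s_1,\ldots,s_n \in S$ of the weakly $n$-ary $S$-primary hyperideals $Q_1,\ldots,Q_n$ into a single element $s = g(s_1^n) \in S$, and then exploit $n$-ary associativity and commutativity, together with the common radical $P := rad(Q_1) = \cdots = rad(Q_n)$, to transfer the weakly $S$-primary property to the intersection. Note first that $\bigl(\bigcap_{j=1}^n Q_j\bigr) \cap S = \varnothing$ is immediate from $Q_j \cap S = \varnothing$, and $rad\bigl(\bigcap_{j=1}^n Q_j\bigr) = \bigcap_{j=1}^n rad(Q_j) = P$.

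The central algebraic tool will be the identity, obtained from associativity and commutativity of $g$,
\[
g(a_1^{i-1}, s, a_{i+1}^n) \;=\; g\bigl(g(a_1^{i-1}, s_j, a_{i+1}^n),\, s_1,\ldots,s_{j-1},s_{j+1},\ldots,s_n\bigr),
\]
together with the entirely analogous rewriting of $g(s, a_i, 1_A^{(n-2)})$. Combined with hyperideal absorption, these identities let me convert any condition that held for a single factor $s_j$ into the corresponding condition for the global witness $s$ (inside $Q_j$, respectively inside $P$).

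Next I fix $a_1^n \in A$ with $0 \neq g(a_1^n) \in \bigcap_{j=1}^n Q_j$. For each $j$, since $0 \neq g(a_1^n) \in Q_j$, the weakly $n$-ary $S$-primary property of $Q_j$ with witness $s_j$ produces an index $i_j \in \{1,\ldots,n\}$ satisfying either $g(s_j, a_{i_j}, 1_A^{(n-2)}) \in Q_j$ (call this case (a)) or $g(a_1^{i_j-1}, s_j, a_{i_j+1}^n) \in rad(Q_j) = P$ (case (b)). If case (b) occurs for even a single $j$, the identity above immediately delivers $g(a_1^{i_j-1}, s, a_{i_j+1}^n) \in P = rad\bigl(\bigcap_k Q_k\bigr)$, verifying the second alternative of the definition at the index $i = i_j$.

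The main obstacle will be the complementary scenario in which only case (a) holds for every $j$: the indices $i_1,\ldots,i_n$ coming from the $n$ separate weakly $S$-primary conditions need not coincide, whereas the definition demands a single index $i$ with $g(s, a_i, 1_A^{(n-2)}) \in Q_k$ for every $k$. I plan to resolve this by essential use of the equality $rad(Q_j) = rad(Q_k)$: if no common index works for the first alternative, then a suitable rearrangement of the factors of $s$ combined with the radical characterization from \cite{sorc1} (namely $x \in rad(Q)$ iff $g(x^{(u)}, 1_A^{(n-u)}) \in Q$ for some $u \leq n$, or the $g_{(l)}$-analog for larger $u$) should force $g(a_1^{i-1}, s, a_{i+1}^n) \in P$ for some $i$, reducing the argument back to case (b) for the intersection. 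This is the same kind of iterated radical manipulation carried out in the proof of Theorem \ref{bidi}.
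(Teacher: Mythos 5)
Your setup — the combined witness $s=g(s_1^n)$, the rewriting identities coming from associativity and commutativity, and the disposal of case (b) — is exactly the paper's strategy, which also puts $s=g(t_1^n)$ and lands in $rad(Q_h)=\bigcap_{j=1}^n rad(Q_j)=rad(\bigcap_{j=1}^n Q_j)$. But your proposal stops precisely where the work has to be done. The scenario you correctly isolate as ``the main obstacle'' --- every $Q_j$ falls into case (a) but at pairwise different indices $i_j$ --- is handled only by the phrases ``a suitable rearrangement \ldots should force'' and an appeal to ``the same kind of iterated radical manipulation carried out in the proof of Theorem \ref{bidi}.'' That is a statement of intent, not an argument, and the tool you point to (the power characterization $g(x^{(u)},1_A^{(n-u)})\in Q$ of membership in $rad(Q)$, iterated as in Theorem \ref{bidi}) is not what closes this case; nothing in your sketch indicates how such a computation would produce a single index valid for all $n$ hyperideals. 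As it stands this is a genuine gap: the decisive step of the proof is missing.

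The gap is fillable, and far more cheaply than you anticipate; this is also what silently underlies the paper's contrapositive step. If case (a) holds for $Q_j$ at index $i_j$, then $g(s,a_{i_j},1_A^{(n-2)})\in Q_j\subseteq rad(Q_j)=P$, because $g(s,a_{i_j},1_A^{(n-2)})=g\bigl(g(s_j,a_{i_j},1_A^{(n-2)}),s_1,\ldots,\widehat{s_j},\ldots,s_n\bigr)$ and $Q_j$ absorbs. Hence for \emph{every} index $i\neq i_j$ the element $g(a_1^{i-1},s,a_{i+1}^n)$, which contains both $s$ and $a_{i_j}$ among its entries, can be written as $g\bigl(g(s,a_{i_j},1_A^{(n-2)}),\,a_1,\ldots,\widehat{a_i},\ldots,\widehat{a_{i_j}},\ldots,a_n,\,1_A\bigr)$ and therefore lies in the hyperideal $P=rad(\bigcap_{k}Q_k)$. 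So the second alternative of the definition is verified at any $i\neq i_j$ (such an $i$ exists since $n\geq 2$), no common index for the first alternative is ever needed, and the hypothesis $rad(Q_j)=rad(Q_k)$ enters exactly here --- through $Q_j\subseteq rad(Q_j)=P$ and absorption --- not through any power computation. With these two observations inserted your outline becomes a complete proof, and one that is arguably tighter than the paper's, whose own treatment leaves the dependence of $h$ on $i$, and of the index supplied by the $S$-primary property of $Q_h$ on $h$, implicit.
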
 
\begin{proof}
Let $Q_j$ be a weakly $n$-ary $S$-primary hyperideal of $A$ for each $j \in \{1,\cdots,n\}$. Then there exists $t_j \in S$ such that for all $x_1^n \in A$ with $0 \neq g(x_1^n) \in Q_j$, either $g(t_j,x_i,1_A^{(n-2)}) \in Q_j$ or $g(x_1^n,t_j,x_{i+1}^n) \in rad(Q_j)$ for some $i \in \{1,\cdots,n\}$. Put $s=g(t_1^n)$. So $s \in S$. Now, let $a_1^n \in A$ such that $0 \neq g(a_1^n) \in \cap_{j=1}^n Q_j$ and $g(s,a_i,1_A^{(n-2)}) \notin \cap_{j=1}^n Q_j$ for all $i \in \{1,\cdots,n\}$. Then there exists $h \in \{1,\cdots,n\}$ with $g(s,a_i,1_A^{(n-2)}) \notin Q_h$ which means $g(t_h,a_i,1_A^{(n-2)}) \notin Q_h$. Since $0 \neq g(a_1^n) \in Q_h$, we conclude that $g(a_1^{i-1},t_h,a_{i+1}^n) \in rad(Q_h)$ and so $g(a_1^{i-1},s,a_{i+1}^n)=g(a_1^{i-1},g(t_1^n),a_{i+1}^n)=g(t_1^{h-1},g(a_1^{i-1},t_h,a_{i+1}^n),t_{h+1}^n) \in rad(Q_h)$. Since $rad(Q_j)=rad(Q_k)$ for each $j,k \in \{1,\cdots,n\}$,  we get the result that  $g(a_1^{i-1},s,a_{i+1}^n) \in rad(Q_h)=\cap_{j=1}^n rad(Q_j)=rad(\cap_{j=1}^n Q_j)$. Consequently, $\cap_{j=1}^n Q_j$ is a weakly $n$-ary $S$-primary hyperideal of $A$.
\end{proof}
\begin{proposition} \label{2pezeshk}
Assume that  $S \subseteq T \subseteq A$ are two $n$-ary multiplicative sets such that for each $s \in T$, there exists $s^\prime \in T$ with $g(s^{(n-1)},s^{\prime}) \in S$. If $Q$ is a weakly  $n$-ary $T$-primary hyperideal of $A$, then $Q$ is a weakly $n$-ary $S$-primary hyperideal of $A$.
\end{proposition}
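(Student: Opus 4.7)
The plan is to promote the $T$-primary witness to an $S$-primary witness by absorbing an auxiliary factor from $T$ so that the resulting product lies in $S$. Since $S \subseteq T$ and the weakly $n$-ary $T$-primary hypothesis already forces $Q \cap T = \varnothing$, I immediately get $Q \cap S = \varnothing$, one of the two requirements. Next, because $Q$ is weakly $n$-ary $T$-primary, there is an element $t \in T$ such that for every $a_1^n \in A$ with $0 \neq g(a_1^n) \in Q$, either $g(t,a_i,1_A^{(n-2)}) \in Q$ or $g(a_1^{i-1},t,a_{i+1}^n) \in rad(Q)$ for some $i \in \{1,\ldots,n\}$. By the hypothesis of the proposition I choose $t' \in T$ with $s := g(t^{(n-1)},t') \in S$, and I claim this $s$ is the desired witness.

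Now fix $a_1^n \in A$ with $0 \neq g(a_1^n) \in Q$ and apply the $T$-primary property at $t$ to obtain the index $i$. In the first case $g(t,a_i,1_A^{(n-2)}) \in Q$, and the associativity and commutativity of $g$ (viewed via the $(2n-1)$-ary product $g_{(2)}$) allow me to regroup $g(s,a_i,1_A^{(n-2)}) = g(g(t^{(n-1)},t'),a_i,1_A^{(n-2)}) = g(t^{(n-2)},t',g(t,a_i,1_A^{(n-2)}))$; since $Q$ is a hyperideal and the inner factor $g(t,a_i,1_A^{(n-2)})$ already lies in $Q$, the whole $n$-ary expression lies in $Q$. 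In the second case $g(a_1^{i-1},t,a_{i+1}^n) \in rad(Q)$, and the same rearrangement yields $g(a_1^{i-1},s,a_{i+1}^n) = g(t^{(n-2)},t',g(a_1^{i-1},t,a_{i+1}^n)) \in rad(Q)$, because $rad(Q)$ is itself a hyperideal. Either alternative at $s$ holds, so $Q$ is weakly $n$-ary $S$-primary.

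The main obstacle is not deep but purely combinatorial: I must verify that the rearrangement of the $2n-1$ atomic factors into an inner $n$-tuple plus an outer $n$-tuple (comprising the remaining $n-1$ factors together with the result of the inner $g$) really follows from the $n$-ary associativity and commutativity of $g$, and that the multisets of factors on the two sides of each identity match. The bookkeeping $(n-1)+1+1+(n-2) = 2n-1$ in the first case and $(i-1)+(n-1)+1+(n-i) = 2n-1$ in the second case make this routine, but easy to write incorrectly if the indexing is not checked carefully.
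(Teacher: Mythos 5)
Your proposal is correct and follows essentially the same route as the paper: take the uniform witness $t\in T$, absorb an auxiliary $t'\in T$ to form $s=g(t^{(n-1)},t')\in S$, and regroup the $(2n-1)$ atomic factors by associativity so that the known element of $Q$ (resp.\ $rad(Q)$) appears as an inner factor, then use the hyperideal property of $Q$ (resp.\ $rad(Q)$). Your version is in fact slightly cleaner than the paper's on the quantifier order (you fix the witness before quantifying over $a_1^n$) and in explicitly noting $Q\cap S=\varnothing$.
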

\begin{proof}
Assume that $a_1^n \in A$ such that $0 \neq g(a_1^n) \in Q$. Since $Q$ is a weakly $n$-ary $T$-primary  hyperideal,  there exists $s \in T$ such that $g(s,a_i,1^{(n-2)}) \in Q$ or $g(a_1^{i-1},s,a_{i+1}^n) \in rad(Q)$ for some $i \in \{1,\cdots,n\}$. By the assumption, there exists $s^{\prime} \in T$ such that $g(s^{(n-1)},s^\prime) \in S$. Put $t=g(s^{(n-1)},s^\prime)$. Let  $g(s,a_i,1^{(n-2)}) \in Q$. Then we get the result that  $g(t,a_i,1_A^{(n-2)})=g(g(s^{(n-1)},s^{\prime}),a_i,1_A^{(n-2)})
=g(g(s^{(n-2)},s^{\prime},1_A),g(s,a_i,1^{(n-2)}),1_A^{(n-2)}) \in Q$. Now, assume that   $g(a_1^{i-1},s,a_{i+1}^n) \in rad(Q)$. Then we conclude that $g(a_1^{i-1},t,a_{i+1}^n)=g(a_1^{i-1},g(s^{(n-1)},s^{\prime}),a_{i+1}^n)
=g(g(s^{n-2},s^{\prime},1_A),g(a_1^{i-1},s,a_{i+1}^n),1_A^{(n-2)}) \in rad(Q)$. Thus $Q$ is a weakly $n$-ary $S$-primary hyperideal of $A$.
\end{proof}

\begin{theorem} \label{24}
Let  $S \subseteq A$ be an $n$-ary multiplicative set and $Q$ be a hyperideal of $A$ such that $1_A \in S$ and $S \cap Q=\varnothing$. Then $Q$ is a weakly $n$-ary $S$-primary  hyperideal of $A$ if and only if $Q$ is a weakly $n$-ary $S^{\prime}$-primary  hyperideal where $S^{\prime}=\{x \in A \ \vert \ \frac{x}{1_A} \ \text{is invertible in} \ S^{-1}A \}$.
\end{theorem}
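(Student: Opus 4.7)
The plan is to deduce both directions from the inclusion $S \subseteq S'$ together with Proposition \ref{2pezeshk}. First I would record the preliminaries that make the $S'$-version of the definition well-posed: $S'$ is itself an $n$-ary multiplicative subset of $A$ (the units of $S^{-1}A$ are closed under the induced multiplication, and $1_A \in S$ forces $1_A \in S'$); the inclusion $S \subseteq S'$ is immediate, since for $s \in S$ the fraction $1_A/s$ inverts $s/1_A$; and $Q \cap S' = \varnothing$ follows from $Q \cap S = \varnothing$, because if $x \in Q \cap S'$ then the invertibility of $x/1_A$ in $S^{-1}A$, after clearing denominators, produces $u \in S$ and $y \in A$ with $g(u, x, y, 1_A^{(n-3)}) \in S$, while the hyperideal property forces this element into $Q$ as well, contradicting disjointness.

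For the forward implication, any witness $s \in S$ that realises $Q$ as weakly $n$-ary $S$-primary lies a fortiori in $S'$, and the defining implication is syntactically identical; so the same $s$ works as a witness for weakly $n$-ary $S'$-primary.

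For the backward implication I would invoke Proposition \ref{2pezeshk} applied with $T := S'$. The nontrivial content is to verify its structural hypothesis: for each $t \in S'$ one must produce $t' \in S'$ with $g(t^{(n-1)}, t') \in S$. Starting from the inverse $a/r$ of $t/1_A$ in $S^{-1}A$ (where $r \in S$ and $a \in A$), I first observe that $a \in S'$ too, because $a/1_A = (a/r) \cdot (r/1_A)$ is a product of units in $S^{-1}A$. Clearing denominators yields $u \in S$ with $g(u, t, a, 1_A^{(n-3)}) \in S$; setting $a' := g(u, a, 1_A^{(n-2)}) \in S'$ produces $c := g(t, a', 1_A^{(n-2)}) \in S$. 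Now $n$-ary multiplicativity of $S$ gives $g(c^{(n)}) \in S$, and by commutativity and associativity of $g$ this element can be rewritten as $g_{(n+1)}(t^{(n)}, (a')^{(n)}, 1_A^{(n(n-2))})$; regrouping so that $n-1$ copies of $t$ are pulled out and the remaining factors are collected into a single element $t' \in S'$ (using closure of $S'$ under $g$) yields the required identity $g(t^{(n-1)}, t') \in S$.

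The main obstacle I anticipate is the combinatorial bookkeeping in this last step: because $g$ is a genuine $n$-ary operation rather than a binary one, the equality $g(c^{(n)}) = g_{(n+1)}(t^{(n)}, (a')^{(n)}, 1_A^{(n(n-2))})$ must be justified by iterated use of associativity, and one has to confirm that the surplus factors beyond the distinguished $n-1$ copies of $t$ can be packaged as one iterated product of size $l(n-1)+1$ for some integer $l$ (here $l = n$ fits, giving $n^2 - n + 1$ entries). Once this rearrangement is in place, Proposition \ref{2pezeshk} transports the weakly $n$-ary $S'$-primary property back to the weakly $n$-ary $S$-primary property and closes the equivalence.
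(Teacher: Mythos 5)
Your proposal is correct and follows essentially the same route as the paper: the forward direction uses $S\subseteq S'$ so that the same witness works, and the backward direction reduces to Proposition \ref{2pezeshk} with $T=S'$ by clearing denominators from the inverse of $t/1_A$ to get an element $c=g(t,a',1_A^{(n-2)})\in S$ and then using $g(c^{(n)})\in S$ to extract $t'\in S'$ with $g(t^{(n-1)},t')\in S$. The only cosmetic difference is in packaging the surplus factors (the paper nests them as $g({a'}^{(n-1)},c)$ rather than as an iterated product $g_{(n)}$), and you are slightly more explicit than the paper in checking $Q\cap S'=\varnothing$ and the multiplicativity of $S'$.
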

\begin{proof}
($\Longrightarrow$) Since $S^{\prime}$ is an $n$-ary multiplicative subset of $A$ containing $S$ and $Q$ is a weakly $n$-ary $S$-primary hyperideal of $A$, we are done.\\ 

($\Longleftarrow$) Suppose that $s \in S^{\prime}$. This implies that $\frac{s}{1_A}$ is invertible in $S^{-1}A$. Then there exists $x \in A$ and $t \in S$ such that $G(\frac{s}{1_A},\frac{x}{t},\frac{1_A}{1_A}^{(n-2)})=\frac{g(s,x,1_A^{(n-2)})}{g(t,1_A^{(n-1)})}=\frac{1_A}{1_A}$.  So there exists $t^{\prime} \in S$ with $0 \in g(t^{\prime}, f(g(s,x,1_A^{(n-2)}),-g(t,1_A^{(n-1)}),0^{(m-2)}),1_A^{(n-2)})=f(g(t^{\prime}, s,x,1_A^{(n-3)}),-g(t^{\prime},t,1_A^{(n-2)}),0^{(m-2)})$. 
Since  $g(t^{\prime},t,1_A^{(n-2)}) \in S$, we get the result that $g(t^{\prime}, s,x,1_A^{(n-3)}) \in f(g(t^{\prime},t,1_A^{(n-2)}),0^{(m-1)}) \subseteq S$.
Let  $t^{\prime \prime }=g(t^{\prime},x,1_A^{(n-2)})$. Since $G(\frac{g(t^{\prime},x,1_A^{(n-2)})}{1_A},\frac{g(s,1_A^{(n-1)})}{g(t^{\prime},x,s,1_A^{(n-3)})},\frac{1_A}{1_A}^{(n-2)})=\frac{g(t^{\prime},x,s,1_A^{(n-3)})}{g(t^{\prime},x,s,1_A^{(n-3)})}=\frac{1_A}{1_A}$, we get $t^{\prime \prime } \in S^{\prime}$.  Therefore we obtain $g(s^{(n-1)},g({{t^{\prime \prime}}^{(n-1)}},g(s,t^{\prime \prime},1_A^{(n-2)})))=g(g(s,t^{\prime \prime},1_A^{(n-2)})^n) \in S$. Put  $s^{\prime}=g({{t^{\prime \prime}}^{(n-1)}},g(s,t^{\prime \prime},1_A^{(n-2)}))$. So $s^{\prime} \in S^{\prime}$. Since $g(s^{(n-1)},s^{\prime}) \in S$,  $Q$ is a weakly $n$-ary $S$-primary hyperideal of $A$ by Proposition \ref{2pezeshk}.
\end{proof}
\begin{proposition} \label{25}
Let $Q$ be   a hyperideal of $A$ and $S \subseteq  A$ be an $n$-ary multiplicative set  with $Q \cap S=\varnothing$. If $(Q : s)$  is a weakly $n$-ary primary hyperideal of $A$  for some $s \in S$, then $Q$ is a weakly $n$-ary $S$-primary hyperideal of $A$.
\end{proposition}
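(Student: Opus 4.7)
The plan is to show that the element $s \in S$ witnessing that $(Q:s)$ is a weakly $n$-ary primary hyperideal also serves as the witness making $Q$ weakly $n$-ary $S$-primary. The preliminary observation is the inclusion $Q \subseteq (Q:s)$: for any $x \in Q$, the hyperideal property yields $g(x,s,1_A^{(n-2)}) \in Q$, so $x \in (Q:s)$. The assumption $Q \cap S = \varnothing$ is given.

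Suppose $a_1^n \in A$ satisfies $0 \neq g(a_1^n) \in Q$. Then $0 \neq g(a_1^n) \in (Q:s)$, so applying the weakly $n$-ary primary property of $(Q:s)$ produces an index $i \in \{1,\cdots,n\}$ for which either $a_i \in (Q:s)$ or $g(a_1^{i-1},1_A,a_{i+1}^n) \in rad((Q:s))$. The first subcase unwinds immediately: $a_i \in (Q:s)$ reads $g(s,a_i,1_A^{(n-2)}) \in Q$, matching the first alternative in the definition of weakly $n$-ary $S$-primary.

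The second subcase is where the real work lies. By the Ameri--Norouzi radical characterization recalled in the preliminaries, there is $u \in \mathbb{N}$ with $g(g(a_1^{i-1},1_A,a_{i+1}^n)^{(u)},1_A^{(n-u)}) \in (Q:s)$ for $u \leq n$, or the analogous $g_{(l)}$-version when $u = l(n-1)+1$. Unwinding the colon hyperideal gives $g(s,g(a_1^{i-1},1_A,a_{i+1}^n)^{(u)},1_A^{(n-u-1)}) \in Q$. Writing $c = g(a_1^{i-1},s,a_{i+1}^n)$ and using the scalar identity together with commutativity and associativity of $g$, one expresses the $u$-th $n$-ary power of $c$ as an expression containing $g(s,g(a_1^{i-1},1_A,a_{i+1}^n)^{(u)},1_A^{(n-u-1)})$ together with $u-1$ further copies of $s$ and enough $1_A$'s to fill out the arities. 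Since $Q$ is a hyperideal and absorbs multiplication by elements of $A$, the resulting power lies in $Q$, giving $c \in rad(Q)$, which is the second alternative in the definition.

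The main obstacle is the last bookkeeping step: cleanly expanding $g(c^{(u)},1_A^{(n-u)})$ (or the corresponding $g_{(l)}$ expression) so that the already-in-$Q$ factor $g(s,g(a_1^{i-1},1_A,a_{i+1}^n)^{(u)},1_A^{(n-u-1)})$ can be isolated, and then absorbing the surplus copies of $s$ via repeated application of the hyperideal property. The subcase $u = l(n-1)+1$ requires the same calculation rephrased through the iterated operation $g_{(l)}$, which adds notational overhead but no new idea.
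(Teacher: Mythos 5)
Your proposal is correct and follows essentially the same route as the paper: pass from $0\neq g(a_1^n)\in Q$ to $(Q:s)$ via $Q\subseteq (Q:s)$, read the first alternative $a_i\in(Q:s)$ as $g(s,a_i,1_A^{(n-2)})\in Q$, and in the radical alternative regroup $g\bigl(g(a_1^{i-1},s,a_{i+1}^n)^{(u)},1_A^{(n-u)}\bigr)$ as the product of $g\bigl(s,g(a_1^{i-1},1_A,a_{i+1}^n)^{(u)},1_A^{(n-u-1)}\bigr)\in Q$ with the surplus $s^{(u-1)}$, absorbed by the hyperideal property. The regrouping identity you describe is exactly the one the paper writes out explicitly, and both treatments defer the $g_{(l)}$ case to "similarly."
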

\begin{proof}
 Assume that  $(Q : s)$ is  a weakly $n$-ary primary hyperideal of $A$  for some $s \in S$.  Let  $a_1^n \in A$ such that $0 \neq g(a_1^n)\in Q$. Therefore we have $0 \neq g(a_1^n)\in (Q : s)$ because $Q \subseteq (Q : s)$. Since the hyperideal $(Q : s)$ is  weakly $n$-ary primary, we have $a_i \in (Q : s)$ or $g(a_1^{i-1},1_A,a_{i+1}^n) \in rad((Q : s))$ for some $i \in \{1,\cdots,n\}$.  In the first possibility, we have $g(s,a_i,1_A^{(n-2)})\in Q$ and so   $Q$ is a weakly $n$-ary $S$-prime hyperideal of $A$. In the second possibility, there exists $u \in \mathbb{N}$ such that $g(g(a_1^{i-1},1_A,a_{i+1}^n)^{(u)},1_A^{(n-u)}) \in (Q : s) $ or $g_{(l)}(g(a_1^{i-1},1_A,a_{i+1}^n)^{(u)}) \in (Q : s)$ for $u=l(n-1)+1$. In the first case,  we have $g(g(a_1^{i-1},s,a_{i+1}^n)^{(u)},1_A^{(n-u)})=g(g(s^{(u-1)},1_A^{(n-u+1)}),g(s,g(a_1^{i-1},1_A,a_{i+1}^n)^{(u)},1_A^{(n-u-1)}),1_A^{(n-2)})) \in Q$  which implies $g(a_1^{i-1},s,a_{i+1}^n) \in rad(Q)$ and so we are done. In the second case, one can complete the proof similarly. 
\end{proof}
The notion of Krasner $(m,n)$-hyperring of fractions was  studied in \cite{mah5}.
\begin{theorem}
Let $S_1 \subseteq S_2 \subseteq A$ be two $n$-ary multiplicative set with $1_A \in S_1$ and $Q$ be a hyperideal of $A$ such that $S_2 \cap Q=\varnothing$. If $Q$ is a weakly $n$-ary $S_1$-primary hyperideal of $A$, then
\begin{itemize} 
\item[\rm{(i)}]~  $S_2^{-1}Q$ is a weakly $n$-ary $S_2^{-1}S_1$-primary hyperideal of $S_2^{-1}A$ where $S_2^{-1}S_1=\{\frac{s}{t} \ \vert \  s \in S_1, t \in S_2\}$. 
\item[\rm{(ii)}]~$S_2^{-1}Q \cap A=(Q : s) \cup 0_{S_2}$ for some $s \in S_1$ such that  $0_{S_2}=\{x \in A \ \vert \ g(x,t,1_A^{(n-2)})=0 \ \text{for some} \ t \in S_2 \}$.
\end{itemize}
\end{theorem}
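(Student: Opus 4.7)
The plan is to treat parts (i) and (ii) in turn, in each case leveraging a fixed witness $s \in S_1$ obtained from the hypothesis that $Q$ is weakly $n$-ary $S_1$-primary, and then transporting this condition across the localization map $A \to S_2^{-1}A$.

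For part (i), I would propose $\frac{s}{1_A} \in S_2^{-1}S_1$ as the witness for $S_2^{-1}Q$. Given $\frac{a_1}{t_1},\ldots,\frac{a_n}{t_n} \in S_2^{-1}A$ with $\frac{0}{1_A} \neq G\!\left(\frac{a_1}{t_1},\ldots,\frac{a_n}{t_n}\right) = \frac{g(a_1^n)}{g(t_1^n)} \in S_2^{-1}Q$, the first step is to unpack membership in $S_2^{-1}Q$: by the construction of the hyperring of fractions, there exists $u \in S_2$ with $g(u, g(a_1^n), 1_A^{(n-2)}) \in Q$, and nontriviality of the fraction forces this element to be nonzero. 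Setting $b_1 = g(u, a_1, 1_A^{(n-2)})$, the $n$-tuple $(b_1, a_2, \ldots, a_n)$ has nonzero $g$-product in $Q$, so the weakly $S_1$-primary property produces some index $i$ and either $g(s, b_i, 1_A^{(n-2)}) \in Q$ or $g(b_1^{i-1}, s, b_{i+1}^n) \in rad(Q)$ (with $b_j = a_j$ for $j \geq 2$). I would then translate each case back to fractions: the first case yields $G\!\left(\frac{s}{1_A}, \frac{a_i}{t_i}, \bigl(\tfrac{1_A}{1_A}\bigr)^{(n-2)}\right) \in S_2^{-1}Q$, while the second, using that $rad(S_2^{-1}Q) = S_2^{-1}rad(Q)$, yields the corresponding replacement-version fraction in $rad(S_2^{-1}Q)$.

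For part (ii), I would take the same witness $s \in S_1$. The inclusion $(Q:s) \cup 0_{S_2} \subseteq S_2^{-1}Q \cap A$ is routine: if $g(a, s, 1_A^{(n-2)}) \in Q$ then $\frac{a}{1_A} = \frac{g(a,s,1_A^{(n-2)})}{s}$ exhibits $\frac{a}{1_A}$ as an element of $S_2^{-1}Q$, and if $a \in 0_{S_2}$ then $\frac{a}{1_A} = \frac{0}{1_A} \in S_2^{-1}Q$. For the reverse inclusion, let $a \in S_2^{-1}Q \cap A$; unpacking gives $t \in S_2$ with $g(t, a, 1_A^{(n-2)}) \in Q$. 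If this product equals $0$, then $a \in 0_{S_2}$. Otherwise I would apply the weakly $S_1$-primary property with witness $s$ to the tuple $(t, a, 1_A, \ldots, 1_A)$: the indices $i=1$ and $i \geq 3$ in the $Q$-branch force $g(s, t, 1_A^{(n-2)}) \in Q$ or $s \in Q$, both contradicting $S_2 \cap Q = \varnothing$ since $s$ and $g(s,t,1_A^{(n-2)})$ lie in $S_2$, reducing to the case $i = 2$, which yields $g(s, a, 1_A^{(n-2)}) \in Q$, i.e.\ $a \in (Q:s)$.

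The main obstacle is the radical branch of the weakly $S_1$-primary condition in the reverse inclusion of part (ii): when the case analysis produces $g(s,a,1_A^{(n-2)}) \in rad(Q)$ or an analogous $rad(Q)$-membership, this does not immediately give $g(s,a,1_A^{(n-2)}) \in Q$. Handling this cleanly requires unwinding the characterisation of $rad(Q)$ via powers landing in $Q$ and iteratively reapplying the weakly $S_1$-primary property, in the spirit of the chain of implications used in the proof of Theorem \ref{bidi}; the fraction manipulations in part (i), in particular keeping track of representatives under the equivalence relation defining $S_2^{-1}A$, form a secondary technical difficulty.
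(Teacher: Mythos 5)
Your part (i) follows essentially the paper's own route: fix the witness $\frac{s}{1_A}$, extract from membership in $S_2^{-1}Q$ a nonzero element $g(u,g(a_1^n),1_A^{(n-2)})\in Q$ with $u\in S_2$, absorb $u$ into one coordinate, apply the weakly $S_1$-primary property to the modified tuple, and push both alternatives back through the localization using $rad(S_2^{-1}Q)=S_2^{-1}rad(Q)$. The only omission is the (required) preliminary check that $S_2^{-1}S_1\cap S_2^{-1}Q=\varnothing$, without which the conclusion ``weakly $S_2^{-1}S_1$-primary'' is not even well posed; the paper verifies this first.

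Part (ii) is where the proposal is genuinely incomplete, and you say so yourself: in the reverse inclusion, applying the defining property to the tuple $(a,t,1_A^{(n-2)})$ may return the alternative $g(s,a,1_A^{(n-2)})\in rad(Q)$, which only gives $a\in (rad(Q):s)$, not $a\in(Q:s)$. Deferring this to ``unwinding $rad(Q)$ via powers and iteratively reapplying the weakly $S_1$-primary property'' does not close the gap: membership in $rad(Q)$ does not in general upgrade to membership in $Q$ by such an iteration, and you give no reason the process would terminate in the desired conclusion rather than cycling through further $rad(Q)$-memberships. The missing idea is much simpler and is what the paper does: invoke the disjunction at the coordinate occupied by $a$, so that the radical alternative reads $g(s,t,1_A^{(n-2)})\in rad(Q)$. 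Since $s\in S_1\subseteq S_2$, $t\in S_2$, $1_A\in S_2$ and $S_2$ is $n$-ary multiplicative with $S_2\cap Q=\varnothing$, one has $S_2\cap rad(Q)=\varnothing$, so this alternative is impossible and the remaining branch $g(s,a,1_A^{(n-2)})\in Q$ gives $a\in(Q:s)$. In other words, arrange the case analysis so that the $rad(Q)$-branch involves only elements of $S_2$. (Be aware that this step presumes the disjunction in the definition can be invoked at an index of one's choosing; under a literal ``for some $i$'' reading your worry is real and the paper's own proof of (ii) carries the same unaddressed branch.)
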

\begin{proof}
(i) It is easy to see that $S_2^{-1}S_1$ is an $n$-ary multiplicative subset of $S_2^{-1}A$. Let $Q$ be a weakly $n$-ary $S_1$-primary hyperideal of $A$ and $S_2^{-1}S_1 \cap S_2^{-1}Q \neq \varnothing$. Assume that $\frac{x}{t} \in S_2^{-1}S_1 \cap S_2^{-1}Q$ This means that $x \in S_1$ and $g(t^{\prime},x,1_A^{(n-2)}) \in Q$ for some $t^{\prime} \in S_2$. Then we have $x \in S_2$ and so $g(t^{\prime},x,1_A^{(n-2)}) \in S_2 \cap Q$ which is impossible. Then $S_2^{-1}S_1 \cap S_2^{-1}Q = \varnothing$. Since $Q$ is a weakly $n$-ary $S_1$-primary hyperideal of $A$, there exists $s \in S_1$ such that, whenever $x_1^n \in A$, $0 \neq g(x_1^n) \in Q$ implies $g(s,x_i,1_A^{(n-2)}) \in Q$ or $g(x_1^{i-1},s,x_{i+1}^n) \in rad(Q)$. Let $\frac{x_1}{t_1},\cdots,\frac{x_n}{t_n} \in S_2^{-1}A$ such that $0 \neq G(\frac{x_1}{t_1},\cdots,\frac{x_n}{t_n}) \in S_2^{-1}Q$ and $G(\frac{x_1}{t_1},\cdots,\frac{x_{i-1}}{t_{i-1}},\frac{s}{1_A},\frac{x_{i+1}}{t_{i+1}},\cdots,\frac{x_n}{t_n}) \notin rad(S_2^{-1}Q)$ for all $i \in \{1,\cdots,n\}$. This means $0 \neq \frac{g(x_1^n)}{g(t_1^n)} \in S_2^{-1}Q$ and so $0 \neq g(t,g(x_1^n),1_A^{(n-2)}) \in Q$ for some $t \in S_2$ and $g(x_1^{i-1},s,x_{i+1}^n) \notin rad(Q)$. This implies that  $g(s,t,x_i,1_A^{(n-3)})=g(s,g(t,x_i,1_A^{(n-2)}) \in Q$ as $Q$ is a weakly $n$-ary $S_1$-primary hyperideal of $A$. Then we get the result that $G(\frac{s}{1_A},\frac{x_i}{t_i},\frac{1_A}{1_A}^{(n-2)})=G(\frac{s}{1_A},\frac{x_i}{t_i},\frac{t}{t},\frac{1_A}{1_A}^{(n-3)})=\frac{g(s,x_i,t,1_A^{(n-3)})}{g(t_i,t,1_A^{(n-2)})} \in S_2^{-1}Q$. Consequenly, $S_2^{-1}Q$ is a weakly $n$-ary $S_2^{-1}S_1$-primary hyperideal of $S_2^{-1}A$.\\

(ii) Let  $Q$ be a weakly $n$-ary $S_1$-primary hyperideal of $A$. Then there exists $s \in S_1$ such that, whenever $x_1^n \in A$, $0 \neq g(x_1^n) \in Q$ implies $g(s,x_i,1_A^{(n-2)}) \in Q$ or $g(x_1^{i-1},s,x_{i+1}^n) \in rad(Q)$. It is clear that $(Q : s) \cup 0_{S_2} \subseteq S_2^{-1}Q \cap A$. Now, take any $a \in S_2^{-1}Q \cap A$. Since $\frac{a}{1_A}=\frac{x}{u}$ for some $x \in Q$ and $u \in S_2$, we obtain $g(t,a,1_A^{(n-2)}) \in Q$ for some $t \in S_2$. Let $g(t,a,1_A^{(n-2)})=0$. Then we conclude that $a \in 0_{S_2}$. Now, let $g(t,a,1_A^{(n-2)}) \neq 0$. Then we get the result that $g(s,t,1_A^{(n-2)}) \in rad(Q)$ which is impossible as $S_2 \cap rad(Q)=\varnothing$ or we have $g(s,a,1_A^{(n-2)}) \in Q$ which implies $a \in (Q : s)$ and so $S_2^{-1}Q \cap A \subseteq (Q : s) \cup 0_{S_2}$. Thus we have $S_2^{-1}Q \cap A=(Q : s) \cup 0_{S_2}$.
\end{proof}

Let $(A_1, f_1, g_1)$ and $(A_2, f_2, g_2)$ be two commutative Krasner $(m, n)$-hyperrings. Recall from \cite{d1} that a  mapping
$\psi : A_1 \longrightarrow A_2$ is called a homomorphism if for all $x^m _1, y^n_ 1 \in A_1$ we have
\begin{itemize}
\item[\rm{(i)}]~$\psi(f_1(x_1,\cdots, x_m)) = f_2(\psi(x_1),\cdots, \psi(x_m)),$
\item[\rm{(ii)}]~$\psi(g_1(y_1,\cdots, y_n)) = g_2(\psi(y_1),\cdots,\psi(y_n))$
\item[\rm{(iii)}]~$\psi(1_{A_1})=1_{A_2}.$
\end{itemize}
\begin{theorem} \label{26}
Assume that $\psi:A_1 \longrightarrow A_2$ is a monomorphism where $(A_1,f_1,g_1)$ and $ (A_2,f_2,g_2)$ are  two commutative Krasner $(m,n)$-hyperrings  and $S \subseteq A_1$  is an $n$-ary multiplicative set such that the element $0$ is not in $\psi(S)$. If  $Q_2$ is a weakly  $n$-ary $\psi(S)$-primary hyperideal of $A_2$, then $\psi^{-1}(Q_2)$ is a weakly $n$-ary $S$-primary hyperideal of $A_1$. 
\end{theorem}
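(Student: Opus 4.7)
The plan is to pull back the weak $\psi(S)$-primary property along $\psi$, using injectivity to preserve the nonzero hypothesis and the homomorphism identities to convert $n$-ary products in $A_2$ into products in $A_1$.

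First I would verify the basic hygiene. Because $\psi$ is a homomorphism, $\psi(S)$ is an $n$-ary multiplicative subset of $A_2$, and since $0 \notin \psi(S)$ the hypothesis $Q_2 \cap \psi(S) = \varnothing$ is consistent (in fact forced, because $Q_2$ is weakly $n$-ary $\psi(S)$-primary). I would then check that $\psi^{-1}(Q_2)$ is a hyperideal of $A_1$ (standard from $\psi$ being an $(m,n)$-hyperring homomorphism) and that $\psi^{-1}(Q_2) \cap S = \varnothing$: otherwise some $s \in S$ gives $\psi(s) \in Q_2 \cap \psi(S)$, a contradiction.

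Next I would fix the witness. By hypothesis there is some $s_0 \in S$ such that $\psi(s_0)$ witnesses the weakly $n$-ary $\psi(S)$-primary property of $Q_2$. I claim this same $s_0$ witnesses the weakly $n$-ary $S$-primary property of $\psi^{-1}(Q_2)$. Take $a_1^n \in A_1$ with $0 \neq g_1(a_1^n) \in \psi^{-1}(Q_2)$. Applying $\psi$ and using injectivity together with $\psi(0) = 0$, we get $0 \neq g_2(\psi(a_1),\ldots,\psi(a_n)) \in Q_2$. By the weak primary property of $Q_2$, there exists $i$ with either
\[
g_2\bigl(\psi(s_0),\psi(a_i),1_{A_2}^{(n-2)}\bigr) \in Q_2, \quad \text{or} \quad g_2\bigl(\psi(a_1)^{i-1},\psi(s_0),\psi(a_{i+1})^n\bigr) \in rad(Q_2).
\]
Using $\psi(1_{A_1}) = 1_{A_2}$ and the multiplicative property of $\psi$, each of these can be rewritten as $\psi(\cdot)$ of the corresponding $g_1$-expression in $A_1$. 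The first case then yields $g_1(s_0,a_i,1_{A_1}^{(n-2)}) \in \psi^{-1}(Q_2)$, as desired.

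The main obstacle is handling the radical in the second case: I need $g_1(a_1^{i-1},s_0,a_{i+1}^n) \in rad(\psi^{-1}(Q_2))$, so I must establish $\psi^{-1}(rad(Q_2)) \subseteq rad(\psi^{-1}(Q_2))$. For this I would invoke the power characterization of the radical from Ameri–Norouzi stated just after Definition 2.7: if $y \in A_1$ has $\psi(y) \in rad(Q_2)$, then there is $u \in \mathbb{N}$ with either $g_2(\psi(y)^{(u)},1_{A_2}^{(n-u)}) \in Q_2$ or $g_{2,(l)}(\psi(y)^{(u)}) \in Q_2$ for $u = l(n-1)+1$. Both cases, after applying the homomorphism identities in reverse, give $g_1(y^{(u)},1_{A_1}^{(n-u)}) \in \psi^{-1}(Q_2)$ or $g_{1,(l)}(y^{(u)}) \in \psi^{-1}(Q_2)$, so $y \in rad(\psi^{-1}(Q_2))$. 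Applying this inclusion with $y = g_1(a_1^{i-1},s_0,a_{i+1}^n)$ closes the second case, and the theorem follows.
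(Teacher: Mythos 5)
Your proposal is correct and follows essentially the same route as the paper's proof: fix $s_0\in S$ so that $\psi(s_0)$ witnesses the weakly $\psi(S)$-primary property, use injectivity of $\psi$ to preserve the condition $0\neq g_1(a_1^n)$, and pull the two alternatives back through the homomorphism identities. The only difference is that you justify the inclusion $\psi^{-1}(rad(Q_2))\subseteq rad(\psi^{-1}(Q_2))$ via the power characterization of the radical, whereas the paper simply asserts the equality $\psi^{-1}(rad(Q_2))=rad(\psi^{-1}(Q_2))$; your added detail is a small improvement, not a divergence.
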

\begin{proof}
 Let  $Q_2$ be a weakly $n$-ary $\psi(S)$-primary hyperideal of $A_2$. Therefore there exists $s \in S$ such that for all $y_1^n \in A_2$ with $0 \neq g_2(y_1^n) \in Q_2$,  $g_2(\psi(s),y_i,1_{A_2}^{(n-2)}) \in Q_2$ or $g_2(y_1^{i-1},\psi(s),y_{i+1}^n) \in rad(Q_2)$ for some $i \in \{1,\cdots,n\}$. Put $Q_1=\psi^{-1}(Q_2)$. It is clear that $Q_1 \cap S = \varnothing$. Assume that $0 \neq g_1(x_1^n) \in Q_1$ for $x_1^n \in A_1$. Then $0 \neq \psi(g_1(x_1^n))=g_2(\psi(x_1),...,\psi(x_n)) \in Q_2$ as $\psi$ is a monomorphism. Therefore we get the result that  $g_2(\psi(s),\psi(x_i),1_{A_2}^{(n-2)})=\psi(g_1(s,x_i,1_{A_1}^{(n-2)})) \in Q_2$  or $g_2(\psi(x_1),\cdots,\psi(x_{i-1}),\psi(s),\psi(x_{i+1}),\cdots,\psi(x_n))=\psi(g_1(x_1^{i-1},s,x_{i+1}^n)) \in rad(Q_2)$. In the first case, we get  $g_1(s,x_i,1_{A_1}^{(n-2)}) \in \psi^{-1}(Q_2)=Q_1$. In the second case, we obtain $g_1(x_1^{i-1},s,x_{i+1}^n) \in \psi^{-1}(rad(Q_2))=rad(\psi^{-1}(Q_2))=rad(Q_1)$. Thus $Q_1=\psi^{-1}(Q_2)$ is a weakly $n$-ary $S$-primary hyperideal of $A_1$.
\end{proof}
\begin{corollary}
Assume that $A_1$ is a subhyperring of $A_2$ and $S \subseteq A_1$  is an $n$-ary multiplicative set. If $Q_2$ is a weakly $n$-ary $S$-primary  hyperideal of $A_2,$ then $A_1 \cap Q_2$ is a weakly $n$-ary $S$-primary hyperideal of $A_1$.
\end{corollary}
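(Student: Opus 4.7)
The plan is to derive this corollary as an immediate consequence of Theorem \ref{26} by taking $\psi$ to be the natural inclusion. Specifically, since $A_1$ is a subhyperring of $A_2$, the inclusion map $\iota : A_1 \longrightarrow A_2$ given by $\iota(x) = x$ is a homomorphism of Krasner $(m,n)$-hyperrings (it preserves $f$, $g$, and the scalar identity by the very definition of subhyperring), and it is clearly injective, hence a monomorphism.

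Next I would verify the hypotheses of Theorem \ref{26} in this setting. Since $Q_2$ is a weakly $n$-ary $S$-primary hyperideal of $A_2$, by definition we have $Q_2 \cap S = \varnothing$. Because every hyperideal contains $0$, this forces $0 \notin S = \iota(S)$, which is precisely the required condition that the zero element is not in $\psi(S)$. Moreover, $\iota(S) = S$ as sets (viewed inside $A_2$), so $Q_2$ is a weakly $n$-ary $\iota(S)$-primary hyperideal of $A_2$.

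Applying Theorem \ref{26} to the monomorphism $\iota$ then gives that $\iota^{-1}(Q_2)$ is a weakly $n$-ary $S$-primary hyperideal of $A_1$. Finally, observing that $\iota^{-1}(Q_2) = \{x \in A_1 \mid x \in Q_2\} = A_1 \cap Q_2$ completes the argument. No serious obstacle is expected here; the only thing to be careful about is making explicit the identification $\iota(S) = S$ and noting that $0 \notin S$ follows automatically from $Q_2 \cap S = \varnothing$.
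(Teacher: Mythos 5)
Your proposal is correct and follows exactly the paper's route: the paper likewise applies Theorem \ref{26} to the inclusion monomorphism $\psi(x)=x$ and uses $\psi^{-1}(Q_2)=A_1\cap Q_2$. The only difference is that you explicitly check the hypothesis $0\notin\psi(S)$ (via $Q_2\cap S=\varnothing$ and $0\in Q_2$), which the paper leaves tacit.
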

\begin{proof}
Consider the monomorphism $\psi : A_1 \longrightarrow A_2$ , defined by $\psi(x)=x$. Since $\psi^{-1}(Q_2)=A_1 \cap Q_2$, we get the result  that $A_1 \cap Q_2$ is a weakly $n$-ary $S$-primary hyperideal of $A_1$, by Theorem \ref{26}.
\end{proof}
Let  $(A_1, f_1, g_1)$ and $(A_2, f_2, g_2)$ be two commutative Krasner $(m,n)$-hyperrings and, $1_{A_1}$ and $1_{A_2}$ be  scalar identities of $A_1$ and $A_2$, respectively. Then the triple $(A_1 \times A_2, f _1 \times f_2 ,g_1 \times g_2 )$ is a Krasner $(m, n)$-hyperring  where $m$-ary hyperoperation
$f _1 \times f_2 $ and $n$-ary operation $g_1 \times g_2$ are defined as follows:

$\hspace{1cm} f_1 \times f_2((x_{1}, y_{1}),\cdots,(x_m,y_m)) = \{(a,b) \ \vert \ \ a \in f_1(x_1^m), b \in f_2(y_1^m) \}$

$\hspace{1cm} g_1 \times g_2 ((a_1,b_1),\cdots,(a_n,b_n)) =(g_1(a_1^n),g_2(b_1^n)) $,\\
for  $x_1^m,a_1^n \in A_1$ and $y_1^m,b_1^n \in A_2$ \cite{mah2}. 
\begin{theorem} \label{cart1}
Suppose that  $Q_1$ and $Q_2$ are  nonzero hyperideals of $A_1$ and $A_2$, respectively,  where $(A_1, f_1, g_1)$ and $(A_2, f_2, g_2)$ are two commutative Krasner $(m,n)$-hyperrings and   $S_1 \subseteq A_1$ and $S_2 \subseteq A_2$ are two $n$-ary multiplicative sets. Then the following statements are equivalent:
 \begin{itemize} 
\item[\rm{(i)}]~ $Q=Q_1 \times Q_2$ is a weakly $n$-ary $S_1 \times S_2$-primary hyperideal of $A_1 \times A_2$.
\item[\rm{(ii)}]~ $Q_1$ is an $n$-ary $S_1$-primary hyperideal of $A_1$  and $S_2 \cap Q_2 \neq \varnothing$ or $Q_2$ is an $n$-ary $S_2$-primary hyperideal of $A_2$  and $S_1 \cap Q_1  \neq \varnothing$
\item[\rm{(iii)}]~ $Q=Q_1 \times Q_2$ is an $n$-ary $S_1 \times S_2$-primary hyperideal of $A_1 \times A_2$.
\end{itemize} 
\end{theorem}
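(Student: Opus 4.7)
I will establish the equivalences via the cycle $(\mathrm{iii})\Rightarrow(\mathrm{i})\Rightarrow(\mathrm{ii})\Rightarrow(\mathrm{iii})$. The first implication $(\mathrm{iii})\Rightarrow(\mathrm{i})$ is immediate: an $n$-ary $S$-primary hyperideal is automatically weakly $n$-ary $S$-primary, since the weakly version merely restricts the defining condition to tuples with nonzero product. The remaining two steps both use the cartesian-product decomposition $rad(Q_1\times Q_2)=rad(Q_1)\times rad(Q_2)$ and the coordinate-wise action of $g_1\times g_2$.

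For $(\mathrm{ii})\Rightarrow(\mathrm{iii})$, assume without loss of generality that $Q_1$ is $n$-ary $S_1$-primary with witness $s_1\in S_1$ and pick $s_2\in S_2\cap Q_2$. I claim $(s_1,s_2)\in S_1\times S_2$ witnesses that $Q=Q_1\times Q_2$ is $n$-ary $S_1\times S_2$-primary. Given $((a_k,b_k))_{k=1}^n$ with $(g_1(a_1^n),g_2(b_1^n))\in Q$, the first coordinate lies in $Q_1$, so the $S_1$-primary property supplies an index $i$ with $g_1(s_1,a_i,1_{A_1}^{(n-2)})\in Q_1$ or $g_1(a_1^{i-1},s_1,a_{i+1}^n)\in rad(Q_1)$. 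Since $s_2\in Q_2\subseteq rad(Q_2)$, the corresponding second coordinate $g_2(s_2,b_i,1_{A_2}^{(n-2)})$ (respectively $g_2(b_1^{i-1},s_2,b_{i+1}^n)$) automatically lies in $Q_2$ (respectively in $rad(Q_2)$), yielding the desired membership in $Q$ or $rad(Q)$.

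For $(\mathrm{i})\Rightarrow(\mathrm{ii})$, let $(s_1,s_2)\in S_1\times S_2$ be the witness from the weakly $n$-ary $S_1\times S_2$-primary hypothesis. Since $Q\cap(S_1\times S_2)=\varnothing$ forces $(s_1,s_2)\notin Q$, at least one of $s_1\notin Q_1$ or $s_2\notin Q_2$ holds, and the argument splits symmetrically according to which of $s_1\in Q_1$ or $s_2\in Q_2$ occurs. Assume, for instance, $s_2\in Q_2$ (so $S_2\cap Q_2\ne\varnothing$); I will verify that $Q_1$ is $n$-ary $S_1$-primary with witness $s_1$. Using the nonzero hypothesis pick $0\ne y\in Q_2$, and for any $a_1^n\in A_1$ with $g_1(a_1^n)\in Q_1$ form the tuple $((a_1,y),(a_2,1_{A_2}),\dots,(a_n,1_{A_2}))$; its $g_1\times g_2$-image $(g_1(a_1^n),y)$ is both nonzero and in $Q$. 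The weakly $S$-primary hypothesis therefore returns some index $i$, and projecting either possible conclusion onto the first coordinate yields exactly $g_1(s_1,a_i,1_{A_1}^{(n-2)})\in Q_1$ or $g_1(a_1^{i-1},s_1,a_{i+1}^n)\in rad(Q_1)$. The symmetric subcase $s_1\in Q_1$ is handled analogously, using $0\ne x\in Q_1$ and the tuple $((x,b_1),(1_{A_1},b_2),\dots,(1_{A_1},b_n))$ to conclude that $Q_2$ is $n$-ary $S_2$-primary.

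The main obstacle is the construction of the auxiliary tuple in $(\mathrm{i})\Rightarrow(\mathrm{ii})$: it must produce a $g_1\times g_2$-image that is simultaneously nonzero (to activate the weakly version of the hypothesis) and in $Q$, while keeping the first coordinate equal to $g_1(a_1^n)$ so that the first-coordinate projection of the weakly $S$-primary conclusion exactly reconstructs the $S_1$-primary condition at $a_1^n$. This balance is what the nonzero hypothesis on $Q_1$ and $Q_2$ is arranged to provide, and it is the precise feature that distinguishes the weak cartesian-product theorem from its non-weak analogue.
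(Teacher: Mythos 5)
Your implications (iii)$\Rightarrow$(i) and (ii)$\Rightarrow$(iii) are correct and coincide with the paper's argument. The genuine gap is in (i)$\Rightarrow$(ii), at the point where you must produce one of the two intersection conditions $S_2\cap Q_2\neq\varnothing$ or $S_1\cap Q_1\neq\varnothing$. From $Q\cap(S_1\times S_2)=\varnothing$ you correctly get $(s_1,s_2)\notin Q_1\times Q_2$, i.e.\ $s_1\notin Q_1$ \emph{or} $s_2\notin Q_2$; but you then split the argument ``according to which of $s_1\in Q_1$ or $s_2\in Q_2$ occurs.'' That disjunction does not follow from anything you have established: nothing in the definition of a weakly $n$-ary $S_1\times S_2$-primary hyperideal prevents the witness from satisfying $s_1\notin Q_1$ \emph{and} $s_2\notin Q_2$ simultaneously, and in that case your proof reaches neither half of statement (ii). This is precisely where the nonzero hypothesis on $Q_1$ and $Q_2$ has to be used in a way your proposal does not use it.

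The paper closes this hole by applying the weakly primary property to a specific factorization of a nonzero element of $Q$: it picks $(0,0)\neq(x,y)\in Q$ and writes $(x,y)=g_1\times g_2\bigl((x,1_{A_2}),(1_{A_1},1_{A_2})^{(n-2)},(1_{A_1},y)\bigr)$. The defining property of the witness $(s_1,s_2)$ then forces either $\bigl(g_1(s_1,x,1_{A_1}^{(n-2)}),s_2\bigr)\in Q$, whence $s_2\in Q_2$ and $S_2\cap Q_2\neq\varnothing$, or $\bigl(s_1,g_2(s_2,y,1_{A_2}^{(n-2)})\bigr)\in rad(Q)=rad(Q_1)\times rad(Q_2)$, whence $s_1\in rad(Q_1)$ and a suitable power of $s_1$ lies in $S_1\cap Q_1$. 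Note that even this argument only yields $s_1\in rad(Q_1)$ rather than $s_1\in Q_1$, which is a further indication that a case split on membership of $s_1,s_2$ in $Q_1,Q_2$ is not the right mechanism. Once one of the intersection conditions is secured, your construction of the auxiliary tuple $((a_1,y),(a_2,1_{A_2}),\dots,(a_n,1_{A_2}))$ and the projection of the conclusion onto the first coordinate is exactly the device the paper uses, and that portion of your argument is sound.
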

\begin{proof}
(i) $\Longrightarrow$ (ii) Suppose that $(0,0) \neq (x,y) \in Q$ for some $x \in Q_1$ and $y \in Q_2$. Then  we get $(0,0) \neq (x,y)=g_1 \times g_2((x,1_{A_2}),(1_{A_1},1_{A_2})^{(n-2)},(1_{A_1},y))\in Q$. Then there exists $(s_1,s_2) \in S_1 \times S_2$ such that $g_1 \times g_2((s_1,s_2),(x,1_{A_2}),(1_{A_1},1_{A_2})^{(n-2)})=(g_1(s_1,x,1_{A_1}^{(n-2)}),g_2(s_2,1_{A_2}^{(n-1)})) \in Q$ or $g_1 \times g_2 ((s_1,s_2),(1_{A_1},y),(1_{A_1},1_{A_2})^{(n-2)})=(g_1(s_1,1_{A_1}^{(n-1)}),
g_2(s_2,y,1_{A_2}^{(n-2)})) \in rad(Q)=rad(Q_1) \times rad(Q_2)$ as $Q$ is a weakly $n$-ary $S_1 \times S_2$-primary hyperideal of $A_1 \times A_2$. In the first case, we get $S_2 \cap Q_2 \neq \varnothing$. In the second case, there exists $u \in \mathbb{N}$ such that $g_1(s_1^{(u)},1_{A_1}^{(n-u)}) \in Q_1$ for $u \leq n$ or $g_{1_{(l)}}(s_1^{(u)}) \in Q_1$ for $u=l(n-1)+1$. This means  $S_1 \cap Q_1  \neq \varnothing$. From $S_2 \cap Q_2 \neq \varnothing$ it follows that $S_1 \cap Q_1 = \varnothing$ as $Q \cap (S_1 \times S_2)= \varnothing$. Then there exists $0 \neq a \in S_2 \cap Q_2$. Now, let $x_1^n \in A_1$ such that $g(x_1^n) \in Q_1$. Then we have $0 \neq (g_1(x_1^n),g_2(a,1_{A_2}^{(n-1)}))=g_1 \times g_2 ((x_1,a),(x_2,1_{A_2}),\cdots(x_n,1_{A_2})) \in Q$. Therefore we get  $(g_1(s_1,x_1,1_{A_1}^{(n-2)}),g_2(s_2,a,1_{A_2}^{(n-2)}))=g_1 \times g_2((s_1,s_2),(x_1,a),(1_{A_1},1_{A_2})^{(n-2)}) \in Q$ or   $(g_1(s_1,x_i,1_{A_1}^{(n-2)}),g_2(s_2,1_{A_2}^{(n-1)}))=g_1 \times g_2((s_1,s_2),(x_i,1_{A_2}),(1_{A_1},1_{A_2})^{(n-2)})\in Q$ or $(g_1(s_1,x_2^n),g_2(s_2,1_{A_2}))=g_1 \times g_2((s_1,s_2),(x_2,1_{A_2}),\cdots,(x_n,1_{A_2})) \in rad(Q)=rad(Q_1) \times rad(Q_2)$ or $(g_1(x_1^{i-1},s_1,x_{i+1}^n),g_2(a,1_{A_2}^{(n-1)})) \in rad(Q)=rad(Q_1) \times rad(Q_2)$ for some $ i \in \{2,\cdots,n\}$. Hence  $g_1(s_1,x_i,1_{A_1}) \in Q_1$ or $g_1(x_1^{i-1},s_1,x_{i+1}^n) \in rad(Q_1)$ which implies
$Q_1$ is an $n$-ary $S_1$-primary hyperideal of $A_1$.\\

(ii) $\Longrightarrow$ (iii) Assume that $Q_1$ is an $n$-ary $S_1$-primary hyperideal of $A_1$  and $S_2 \cap Q_2  \neq \varnothing$. Then there exists $s_2 \in S_2 \cap Q_2$. Let $x_1^n \in A_1$ and $y_1^n \in A_2$ such that $g((x_1,y_1),\cdots,(x_n,y_n))=(g_1(x_1^n),g_2(y_1^n)) \in Q$. Since $g_1(x_1^n) \in Q_1$ and $Q_1$ is an $n$-ary $S_1$-primary hyperideal of $A_1$, there exists $s_1 \in S_1$ satisfying  $g_1(s_1,x_i,1_{A_1}^{(n-2)}) \in Q_1$ or $g_1(x_1^{i-1},s_1,x_{i+1}^n) \in rad(Q_1)$ for some $i \in \{1,\cdots,n\}$. Therefore we have $(g_1(s_1,x_i,1_{A_2}^{(n-2)}),g_2(s_2,y_i,1_{A_1}^{(n-2)})) \in Q$ or $(g_1(x_1^{i-1},s_1,x_{i+1}^n),g_2(y_1^{i-1},s_2,y_{i+1}^n)) \in rad(Q_1) \times Q_2 $. Then we conclude that   $g_1 \times g_2((s_1,s_2),(x_i,y_i),(1_{A_1},1_{A_2})^{(n-2)}) \in Q$ or $g_1 \times g_2((x_1,y_1),\cdots,(x_{i-1},y_{i-1}),(s_1,s_2),(x_{i+1},y_{i+1}),\cdots,(x_n,y_n)) \in  rad(Q)$. Consequently,  $Q$ is an $n$-ary $S_1 \times S_2$-primary hyperideal of $A_1 \times A_2$. Similarly, one can show that $Q$ is an $n$-ary $S_1 \times S_2$-primary hyperideal of $A_1 \times A_2$ where $Q_2$ is an $n$-ary $S_2$-primary hyperideal of $A_2$  and $S_1 \cap Q_1 \neq \varnothing$.\\

(iii) $\Longrightarrow$ (i) It is clear.
\end{proof}
Now, the following result concluded by using mathematical induction on $k$ and   previous theorem is given.
\begin{theorem} \label{cart2}
Suppose that  $Q_1^k$ are  nonzero hyperideals of $A_1^k$, respectively,  where $(A_k, f_k, g_k), \cdots, (A_k, f_k, g_k)$ are  commutative Krasner $(m,n)$-hyperrings and   $S_1 \subseteq A_1,\cdots, S_k \subseteq A_k$ are  $n$-ary multiplicative sets. Then  $Q=Q_1 \times \cdots \times Q_k$ is a weakly $n$-ary $(S_1 \times \cdots \times S_k)$-primary hyperideal of $A_1 \times \cdots \times A_k$ if and only if  $Q_j$ is an $n$-ary $S_j$-primary hyperideal of $A_j$ for some $j \in \{1,\cdots,k\}$ and $S_h\cap Q_h \neq \varnothing$ for all $h \neq j$.
\end{theorem}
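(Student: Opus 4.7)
My plan is to prove this by induction on $k$, with base case $k=2$ supplied directly by Theorem \ref{cart1}. To make the induction go through cleanly, I would actually prove a slightly strengthened statement: $Q = Q_1 \times \cdots \times Q_k$ is a weakly $n$-ary $(S_1 \times \cdots \times S_k)$-primary hyperideal if and only if it is an $n$-ary $(S_1 \times \cdots \times S_k)$-primary hyperideal if and only if the stated intersection condition holds. This matches what Theorem \ref{cart1} gives in the $k=2$ case, where (i), (ii), (iii) are all equivalent.

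For the inductive step, assume the claim for $k-1$ factors and write
\[
A_1 \times \cdots \times A_k \;\cong\; (A_1 \times \cdots \times A_{k-1}) \times A_k,
\]
together with the corresponding decompositions $S = S' \times S_k$ and $Q = Q' \times Q_k$ where $S' = S_1 \times \cdots \times S_{k-1}$ and $Q' = Q_1 \times \cdots \times Q_{k-1}$. Since Krasner $(m,n)$-hyperrings behave well under finite products (the hyperoperation and operation on the product are defined componentwise), I would verify that these isomorphisms are compatible with the primary/weakly primary conditions in the obvious way. Applying Theorem \ref{cart1} to the two-factor decomposition yields that $Q$ is weakly $n$-ary $(S' \times S_k)$-primary if and only if one of the following holds: $Q'$ is $n$-ary $S'$-primary and $S_k \cap Q_k \neq \varnothing$, or $Q_k$ is $n$-ary $S_k$-primary and $S' \cap Q' \neq \varnothing$.

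The second alternative simplifies immediately, since $S' \cap Q' = (S_1 \cap Q_1) \times \cdots \times (S_{k-1} \cap Q_{k-1})$ is nonempty exactly when each $S_h \cap Q_h \neq \varnothing$ for $h \in \{1,\dots,k-1\}$; combined with $Q_k$ being $n$-ary $S_k$-primary, this is precisely the stated condition with $j = k$. In the first alternative, the inductive hypothesis applied to $Q'$ (in its strengthened form asserting $n$-ary $S'$-primary equals the component condition) yields that $Q_j$ is $n$-ary $S_j$-primary for some $j \in \{1,\dots,k-1\}$ and $S_h \cap Q_h \neq \varnothing$ for all $h \in \{1,\dots,k-1\} \setminus \{j\}$; together with the extra hypothesis $S_k \cap Q_k \neq \varnothing$, this is precisely the stated condition with this same $j$. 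The two alternatives together exhaust all indices $j \in \{1,\dots,k\}$.

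I do not anticipate a genuine obstacle here, since the work has been done in Theorem \ref{cart1}; the only care required is to keep the bookkeeping straight and to prove the strengthened statement (equivalence of weakly primary and primary on products) in tandem so that the inductive hypothesis can be invoked in the form Theorem \ref{cart1} requires. The ``nonzero hyperideals'' hypothesis is used, as in Theorem \ref{cart1}, to produce a nonzero element of $Q'$ or $Q_k$ to witness the weakly primary condition nontrivially.
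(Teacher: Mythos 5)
Your proposal is correct and follows essentially the same route as the paper, which likewise obtains Theorem \ref{cart2} by induction on $k$ from Theorem \ref{cart1} (the paper gives no further details). Your explicit bookkeeping — carrying along the strengthened equivalence ``weakly primary $\Leftrightarrow$ primary $\Leftrightarrow$ componentwise condition'' so that the inductive hypothesis can be applied to the factor $Q_1\times\cdots\times Q_{k-1}$ — is exactly the care the induction requires.
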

Recall from \cite{davvazz} that a hyperideal $Q$ of $A$  is  strongly weakly $n$-ary primary  if $0 \neq g(Q_1^n) \subseteq Q$ for each hyperideals $Q_1^n$ of $A$  implies  $Q_i \subseteq Q$ or $g(Q_1^{i-1},1_A,Q_{i+1}^n) \subseteq rad(Q)$ for some $i \in \{1,\cdots,n\}$. Suppose that  $S \subseteq A$ is an $n$-ary multiplicative set  such that  $S \cap Q= \varnothing$.  A hyperideal $Q$ of $A$  is called  strongly weakly $n$-ary $S$-primary  if there exists an element $s \in S$ such that for all hyperideal  $Q_1^n $ in $ A$ if $0 \neq g(Q_1^n) \subseteq Q$, we get $g(s,Q_i,1_A^{(n-2)}) \subseteq Q$ or $g(Q_1^{i-1},s,Q_{i+1}^n) \subseteq rad(Q)$ for some $i \in \{1,\cdots,n\}$.  In this case, $s$ is  said to be a strongly weakly $S$-primary element of $Q$. It is clear that every strongly weakly $n$-ary $S$-primary hyperideal of $A$  is a weakly $n$-ary $S$-primary  hyperideal.
\begin{theorem} \label{27}
Let $Q$ be a strongly weakly $n$-ary $S$-primary hyperideal of $A$ such that  $s$  is a strongly weakly $S$-primary element of $Q$. If $g(x_1^n)=0$ for $x_1^n \in A$ but $g(s,x_i,1_A^{(n-2)}) \notin Q$ and $g(x_1^{i-1},s,x_{i+1}^n) \notin rad(Q)$ for all $i \in \{1,\cdots,n\}$,    then $g(x_1,\cdots,\widehat{x_{j_1}}, \cdots,\widehat{x_{j_2}},\cdots, \widehat{x_{j_u}}, \cdots, Q^{(u)})=0$ for each $j_1,\cdots,j_u \in \{1,\cdots,n\}$. 
\end{theorem}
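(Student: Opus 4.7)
The plan is to induct on $u$, with the base case $u=0$ being exactly the hypothesis $g(x_1^n)=0$. For the inductive step, assume the conclusion for every $u'<u$, fix indices $j_1,\ldots,j_u\in\{1,\ldots,n\}$, and suppose for contradiction that there exist $q_1,\ldots,q_u\in Q$ making $g(y_1,\ldots,y_n)\neq 0$, where $y_{j_l}=q_l$ for each $l\in\{1,\ldots,u\}$ and $y_k=x_k$ otherwise.

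For each $k$, define a hyperideal $I_k$ as follows: if $k=j_l$ for some $l$, let $I_k$ be the hyperideal generated by $\{x_k,q_l\}$; otherwise set $I_k=<x_k>$. The key intermediate claim is that $0\neq g(I_1^n)\subseteq Q$. Non-vanishing is immediate because $g(y_1,\ldots,y_n)$ lies in the product, since $q_l\in I_{j_l}$ and $x_k\in I_k$ at the remaining slots. For containment in $Q$, I would expand a generator of $g(I_1^n)$ via distributivity of $g$ over $f$ into monomials in which each swap slot $j_l$ contributes either a multiple of $x_{j_l}$ or a multiple of $q_l$. Any monomial with a $q_l$-multiple in only $u'<u$ of the swap slots (so that $x_{j_l}$-multiples occupy the remaining swap slots) vanishes by the inductive hypothesis applied to those $u'$ positions, the extreme case $u'=0$ being the assumption $g(x_1^n)=0$; any monomial in which every swap slot contributes a $q_l$-multiple belongs to $Q$ because each such $q_l\in Q$ and $Q$ is a hyperideal.

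Now I would apply the strongly weakly $n$-ary $S$-primary property of $Q$, with its fixed witness $s$, to the hyperideals $I_1,\ldots,I_n$. This yields an index $i$ for which either $g(s,I_i,1_A^{(n-2)})\subseteq Q$ or $g(I_1^{i-1},s,I_{i+1}^n)\subseteq rad(Q)$. Because $x_k\in I_k$ for every $k$, specialising each $I_k$-factor to $x_k$ produces $g(s,x_i,1_A^{(n-2)})\in Q$ or $g(x_1^{i-1},s,x_{i+1}^n)\in rad(Q)$, both of which contradict the standing hypotheses on the tuple $(x_1,\ldots,x_n)$; this closes the induction. The main obstacle I anticipate is the bookkeeping in the middle step: one must carefully expand products of the sum-hyperideals $I_{j_l}=<x_{j_l},q_l>$ through the $m$-ary hyperoperation $f$ and invoke the zero-absorbing axiom before the inductive hypothesis can be applied uniformly to every mixed monomial.
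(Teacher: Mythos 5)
Your proposal is correct and follows essentially the same route as the paper: induction on $u$, a contradiction obtained by perturbing the entries $x_{j_l}$ by elements of $Q$, the inductive hypothesis used to annihilate the mixed monomials so that the perturbed product is nonzero and lands in $Q$, and then the strongly weakly $n$-ary $S$-primary property specialised back to the $x_k$'s. The only (cosmetic, and in fact slightly more rigorous) difference is that you package the perturbed slots as the finitely generated hyperideals $\langle x_{j_l},q_l\rangle$ and $\langle x_k\rangle$, whereas the paper applies the property directly to the hypersums $f(x_{j_l},a_l,0^{(m-2)})$.
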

\begin{proof}
We use the induction on $u$. Let $u=1$. Suppose on the contrary that  $g(x_1^{i-1},Q,x_{i+1}^n) \neq 0$ for some $i \in \{1,\cdots,n\}$. Hence  we have $0 \neq g(x_1^{i-1},x,x_{i+1}^n) \in Q$ for some $x \in Q$. Then   $0 \neq g(x_1^{i-1},x,x_{i+1}^n)=f(g(x_1^n),g(x_1^{i-1},x,x_{i+1}^n),0^{(m-2)})=g(x_1^{i-1},f(x,x_i,0^{(m-2)}),x_{i+1}^n) \subseteq Q$. By the hypothesis, we get the result that $g(s,f(x,x_i,0^{(m-2)}),1_A^{(n-2)})=f(g(s,x,1_A^{(n-2)}),g(s,x_i,1_A^{(n-2)}),0^{(m-2)}) \subseteq Q$ which implies  $g(s,x_i,1_A^{(n-2)}) \in Q$ or $g(x_1^{i-1},s,x_{i+1}^n) \in rad(Q)$ for some $i \in \{1,\cdots,n\}$ which is impossible. Now, assume that the
claim is true for all positive integers being  less than $u$. Let $g(x_1,\cdots,\widehat{x_{j_1}}, \cdots,\widehat{x_{j_2}},\cdots, \widehat{x_{j_u}}, \cdots, Q^{(u)}) \neq 0$ for some $j_1,\cdots,j_u \in \{1,\cdots,n\}$. . Without loss of generality, we eliminate $x_1^u$. So we have $g(x_{u+1},\cdots,x_n,Q^{(u)}) \neq 0$. Then   $0 \neq g(x_{u+1},\cdots,x_n,a_1^u) \in Q$ for some $a_1^u \in Q$. By induction hypothesis, we get $0 \neq g(f(x_1,a_1,0^{(m-2)}),\cdots,f(x_u,a_u,0^{(m-2)}),x_{u+1}^n) \subseteq Q$. By the hypothesis, we get the result that  $g(s,f(x_i,a_i,0^{(m-2)}),1_A^{(n-2)}) \in Q$  or $g(f(x_1,a_1),\cdots,\widehat{f(x_i,a_i,0^{(m-2)})},\cdots,f(x_u,a_u,0^{(m-2)}),s,x_{u+1}^n) \in rad(Q)$  for some $i \in \{1,\cdots,u\}$ or   $g(f(x_1,a_n,0^{(m-2)}),\cdots,f(x_u,a_u,0^{(m-2)}),x_{u+1}^{j-1},s, x_{j+1}^n) \in rad(Q)$  or $g(s,x_j,1_A^{(n-2)}) \in Q$   for some $j \in \{u+1,\cdots,n\}$. This implies that $g(s,x_i,1_A^{(n-2)}) \in Q$ and $g(x_1^{i-1},s,x_{i+1}^n) \in  rad(Q)$ for some $i \in \{1,\cdots,n\}$ which is a contradiction. Thus $g(x_1,\cdots,\widehat{x_{j_1}}, \cdots,\widehat{x_{j_2}},\cdots, \widehat{x_{j_u}}, \cdots, Q^{(u)})=0$ for each $j_1,\cdots,j_u \in \{1,\cdots,n\}$.  
\end{proof}
\begin{theorem} \label{28}
Assume that $S \subseteq A$ is an $n$-ary multiplicative set and  $Q$ is a strongly weakly $n$-ary $S$-primary hyperideal of $A$. If  $g(Q^{(n)}) \neq 0$, then  $Q$ is an $n$-ary $S$-primary hyperideal of $A$. 
\end{theorem}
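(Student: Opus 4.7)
The plan is to verify the $n$-ary $S$-primary property element-wise, using the strongly weakly $S$-primary element $s \in S$ that exists by hypothesis. Fix $a_1^n \in A$ with $g(a_1^n) \in Q$; I need to show that $g(s,a_i,1_A^{(n-2)}) \in Q$ or $g(a_1^{i-1},s,a_{i+1}^n) \in rad(Q)$ for some $i$. I split into two cases according to whether $g(a_1^n)$ is zero or not.

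In the case $g(a_1^n) \neq 0$, the idea is to transfer to the hyperideal version by considering the principal hyperideals $Q_i = \langle a_i \rangle$. Then $g(Q_1^n) \subseteq Q$ and $g(Q_1^n) \neq 0$, so the strongly weakly $n$-ary $S$-primary hypothesis yields $g(s, Q_i, 1_A^{(n-2)}) \subseteq Q$ or $g(Q_1^{i-1}, s, Q_{i+1}^n) \subseteq rad(Q)$ for some $i \in \{1,\cdots,n\}$. Specializing each $Q_j$ back to the generator $a_j$ gives the desired conclusion on elements.

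The main obstacle is the case $g(a_1^n) = 0$, since the "weakly" hypothesis gives no information there and we must exploit the extra assumption $g(Q^{(n)}) \neq 0$. My plan is a proof by contradiction: suppose that $g(s,a_i,1_A^{(n-2)}) \notin Q$ and $g(a_1^{i-1},s,a_{i+1}^n) \notin rad(Q)$ hold simultaneously for every $i \in \{1,\cdots,n\}$. Then the hypotheses of Theorem \ref{27} are satisfied, so for every choice of indices $j_1,\cdots,j_u \in \{1,\cdots,n\}$ we have
\[g(a_1,\cdots,\widehat{a_{j_1}},\cdots,\widehat{a_{j_u}},\cdots,Q^{(u)})=0.\]
Taking $u=n$ (so that all of $a_1,\cdots,a_n$ are eliminated) yields $g(Q^{(n)}) = 0$, contradicting the standing assumption $g(Q^{(n)}) \neq 0$. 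Hence one of the required inclusions must hold, completing the verification that $Q$ is an $n$-ary $S$-primary hyperideal of $A$.

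The only delicate point is the first case: one must ensure that the strongly weakly $n$-ary $S$-primary condition on hyperideals really does specialize to the element-wise condition. This is routine because $a_i \in \langle a_i \rangle$ and $g$ is distributive, so $g(s,a_i,1_A^{(n-2)}) \in g(s,\langle a_i\rangle,1_A^{(n-2)})$ and likewise for the radical inclusion.
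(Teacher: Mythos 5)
Your proof follows the paper's strategy: split on whether $g(a_1^n)$ vanishes, dispose of the nonzero case by the weakly $S$-primary property (your detour through the principal hyperideals $\langle a_i\rangle$ is a correct, if roundabout, way of seeing that a strongly weakly $n$-ary $S$-primary hyperideal is weakly $n$-ary $S$-primary, which the paper simply asserts), and in the zero case play Theorem \ref{27} against the hypothesis $g(Q^{(n)})\neq 0$. The one genuine difference is the final step: you invoke Theorem \ref{27} with $u=n$, reading its conclusion as $g(Q^{(n)})=0$ and getting an immediate contradiction, whereas the paper only cites Theorem \ref{27} for $u\in\{1,\cdots,n-1\}$ and then runs a separate perturbation argument, replacing each $x_i$ by $f(x_i,a_i,0^{(m-2)})$ for a witness $a_1^n\in Q$ with $g(a_1^n)\neq 0$, to manufacture a nonzero product of subsets inside $Q$ and extract the required membership. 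Your shortcut is cleaner and is defensible: the statement of Theorem \ref{27} places no upper bound on $u$, and the inductive step of its proof at $u=n$ is essentially the paper's perturbation argument verbatim, so the degenerate instance in which every $x_j$ is eliminated does follow from the same induction. But since the paper itself visibly declines to use Theorem \ref{27} at $u=n$ (suggesting the author regards that instance as outside the theorem's intended scope), you should either state explicitly that the induction in Theorem \ref{27} extends to $u=n$, or reproduce the perturbation step; as written, the entire weight of the hard case rests on a reading of Theorem \ref{27} that the paper does not itself endorse.
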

\begin{proof}
Suppose that $Q$ is a strongly weakly $n$-ary $S$-primary  hyperideal of $A$ and $s$ is a strongly weakly $S$-primary element of $Q$.  Let $x_1^n \in A$ such that $g(x_1^n) \in Q$. If $0 \neq g(x_1^n) \in Q$, then we get $g(s,x_i,1_A^{(n-2)}) \in Q$ or $g(x_1^n,s,x_{i+1}^n) \in rad(Q)$ for some $i \in \{1,\cdots,n\}$ which implies $Q$ is an $n$-ary $S$-primary hyperideal of $A$.  Suppose  that $g(x_1^n)=0$. If  $g(x_1,\cdots,\widehat{x_{j_1}}, \cdots,\widehat{x_{j_2}},\cdots, \widehat{x_{j_u}}, \cdots, Q^{(u)}) \neq 0$. Then we conclude that  $g(s,x_i,1_A^{(n-2)}) \in Q$ or $g(x_1^n,s,x_{i+1}^n) \in rad(Q)$ for some $i \in \{1,\cdots,n\}$ in view of Theorem \ref{27}. Let $g(x_1,\cdots,\widehat{x_{j_1}}, \cdots,\widehat{x_{j_2}},\cdots, \widehat{x_{j_u}}, \cdots, Q^{(u)})= 0$ for all $u \in \{1,\cdots,n-1\}$.
 Since $ 0 \neq g(Q^{(n)})$,  there exist $a_1^n \in Q$ such that $g(a_1^n) \neq 0$. Therefore we have $0 \neq g(f(x_1,a_1,0^{(m-2)}), \cdots,f(x_n,a_n,0^{(m-2})) \subseteq Q$ by Theorem \ref{27}. Hence  $g(s,f(x_i,a_i,0^{(m-2)}) \subseteq Q$ and so $f(g(s,x_i,1_A^{(n-2)}),g(s,a_i,1_A^{(n-2)}),0^{(m-2)}) \subseteq Q$     
 which implies $g(s,x_i,1_A^{(n-2)}) \in Q$ as $g(s,a_i,1_A^{(n-2)}) \in Q$ or we conclude that  $g(f(x_1,a_1,0^{(m-2)}),\cdots,\widehat{f(x_{i},a_{i},0^{(m-2)})},s,\cdots,f(x_n,a_n,  0^{(m-2)})) \subseteq rad(Q)$ which means  $g(x_1^{i-1},s,x_{i+1}^n) \in rad(Q)$ by Theorem \ref{27}. Consequenly,  $Q$ is an $n$-ary $S$-primary hyperideal of $A$.
\end{proof} 
We give the following theorem as a result of Theorem \ref{28}.
\begin{theorem}
Assume that $S \subseteq A$ is an $n$-ary multiplicative set and  $Q$ is a strongly weakly $n$-ary $S$-primary hyperideal of $A$. If   $Q$ is not an $n$-ary $S$-primary hyperideal of $A$, then $rad(Q)=rad(0)$. 
\end{theorem}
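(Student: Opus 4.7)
The plan is to derive this as a contrapositive of Theorem~\ref{28}. That theorem asserts that whenever $Q$ is a strongly weakly $n$-ary $S$-primary hyperideal of $A$ with $g(Q^{(n)}) \neq 0$, then $Q$ is already $n$-ary $S$-primary. Under the hypothesis here, $Q$ is \emph{not} $n$-ary $S$-primary, so the contrapositive immediately forces $g(Q^{(n)}) = 0$. The entire task then reduces to showing that the condition $g(Q^{(n)}) = 0$ implies $rad(Q) = rad(0)$.

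One inclusion, $rad(0) \subseteq rad(Q)$, is trivial: since $\{0\} \subseteq Q$, every prime hyperideal containing $Q$ also contains $\{0\}$, so intersecting gives the containment of radicals. For the reverse inclusion $rad(Q) \subseteq rad(0)$, I would take an arbitrary $x \in rad(Q)$ and use the Ameri--Norouzi characterization of the radical recalled in the preliminaries: there exists $u \in \mathbb{N}$ such that either $y := g(x^{(u)}, 1_A^{(n-u)}) \in Q$ when $u \leq n$, or $y := g_{(l)}(x^{(u)}) \in Q$ when $u = l(n-1)+1$. Either way $y \in Q$, so applying $g(Q^{(n)}) = 0$ to $n$ copies of $y$ yields $g(y^{(n)}) = 0$.

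The concluding step is to rewrite $g(y^{(n)}) = 0$ in one of the canonical forms that certify membership in $rad(0)$. Substituting the expression for $y$ and using generalized associativity of $g$ together with the scalar identity property of $1_A$, $g(y^{(n)})$ unfolds into an iterated product consisting of $un$ copies of $x$ interspersed with copies of $1_A$; absorbing excess $1_A$'s reduces this expression to the shape $g(x^{(u')}, 1_A^{(n-u')})$ or $g_{(l')}(x^{(u')})$ appearing in the radical characterization. Since this expression equals $0$, we conclude $x \in rad(\{0\}) = rad(0)$, finishing the reverse inclusion and hence the proof. The main (and essentially only) delicacy is purely combinatorial: matching the exponent $un$ against the two permitted canonical formats, which in both sub-cases is handled by the same principle of repeatedly applying generalized associativity and absorbing $1_A$'s.
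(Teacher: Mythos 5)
Your proposal is correct and follows the route the paper intends: the paper states this theorem as a consequence of Theorem~\ref{28} without supplying a proof, and your argument --- contrapositive of Theorem~\ref{28} to force $g(Q^{(n)})=0$, hence every $y\in Q$ satisfies $g(y^{(n)})=0$, so $Q\subseteq rad(0)$ and therefore $rad(Q)=rad(0)$ --- is exactly that consequence spelled out. One caveat about your final combinatorial step: the exponent $un$ need not fit either canonical format (for $n=3$, $u=2$ one gets six copies of $x$, whereas the formats permit only $u'\le 3$ or $u'=2l'+1$), so instead of matching formats it is cleaner to observe that a vanishing product of copies of $x$ and $1_A$ lies in every prime hyperideal, and since no proper prime hyperideal contains $1_A$, primeness forces $x$ into every prime hyperideal and hence into $rad(0)$; with that repair the argument is complete.
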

 
Assume that $M $ is a non-empty set. Recall from \cite{Anvariyeh} that a triple $(M, f^{\prime}, g^{\prime})$ is called an $(m, n)$-hypermodule over a commutative Krasner $(m,n)$-hyperring $(A,f,g)$     if  $(M, f^{\prime})$ is an $m$-ary hypergroup and  the map 
\[g^{\prime}:\underbrace{A \times \cdots \times A}_{n-1} \times M\longrightarrow 
P^*(M)\]
statisfied the following conditions:
\begin{itemize} 
\item[\rm{(1)}]~ $g^{\prime}(a_1^{n-1},f^{\prime}(x_1^m))=f^{\prime}(g^{\prime}(a_1^{n-1},x_1),\cdots,g^{\prime}(a_1^{n-1},x_m))$
\item[\rm{(2)}]~ $g^{\prime}(a_1^{i-1},f(b_1^m),a_{i+1}^{n-1},x)=f^{\prime}(g^{\prime}(a_1^{i-1},b_1,a_{i+1}^{n-1},x),\cdots,g^{\prime}(a_1^{i-1}
b_m,a_{i+1}^{n-1},x))$
\item[\rm{(3)}]~ $g^{\prime}(a_1^{i-1},g(a_i^{i+n-1}),a_{i+m}^{n+m-2},x)=
g^{\prime}(a_1^{n-1},g^{\prime}(a_m^{n+m-2},x))$
\item[\rm{(4)}]~ $ 0=g^{\prime}(a_1^{i-1},0,a_{i+1}^{n-1},x)$.
\end{itemize}

In the following theorem, we consider Nakayama$^,$s Lemma for a strongly weakly $n$-ary $S$-primary hyperideal.
\begin{theorem} \label{29}
Assume that  $(M,f^{\prime},g^{\prime})$ is an $(m,n)$-hypermodule over $(A,f,g)$ and  $Q$ is a strongly weakly $n$-ary $S$-primary hyperideal of $A$ that is not $n$-ary $S$-primary. If   $M=g^{\prime}(Q,1_A^{(n-2)},M)$, then  $M=\{0\}$. 
\end{theorem}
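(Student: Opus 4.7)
The plan is to exploit the contrapositive of Theorem~\ref{28}: since $Q$ is strongly weakly $n$-ary $S$-primary and fails to be $n$-ary $S$-primary, we must have $g(Q^{(n)}) = 0$; that is, every $n$-fold $g$-product of elements of $Q$ vanishes. The remainder of the argument is a Nakayama-style iteration of the hypothesis $M = g^{\prime}(Q, 1_A^{(n-2)}, M)$, aimed at writing each element of $M$ as a member of a set of the shape $g^{\prime}(0, 1_A^{(n-2)}, m^{\prime})$, which axiom~(4) of the $(m,n)$-hypermodule structure collapses to $\{0\}$.

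Concretely, I would first fix an arbitrary $m \in M$. Using $M = g^{\prime}(Q, 1_A^{(n-2)}, M)$, pick $q_1 \in Q$ and $m_1 \in M$ with $m \in g^{\prime}(q_1, 1_A^{(n-2)}, m_1)$. Applying the hypothesis to $m_1$ yields $q_2 \in Q$ and $m_2 \in M$ with $m_1 \in g^{\prime}(q_2, 1_A^{(n-2)}, m_2)$, and the mixed-associativity axiom~(3) of $(m,n)$-hypermodules (combined with commutativity of $g$) collapses the nested expression so that
$$ m \in g^{\prime}\bigl(g(q_1, q_2, 1_A^{(n-2)}), 1_A^{(n-2)}, m_2\bigr). $$
A straightforward induction on the number of substitutions then shows that after $n-1$ applications one obtains
$$ m \in g^{\prime}\bigl(g(q_1, q_2, \ldots, q_n), 1_A^{(n-2)}, m_n\bigr) $$
for some $q_1, \ldots, q_n \in Q$ and $m_n \in M$.

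To finish, $g(q_1, \ldots, q_n) \in g(Q^{(n)}) = \{0\}$ by the reduction in the first paragraph, so axiom~(4) yields $g^{\prime}(0, 1_A^{(n-2)}, m_n) = \{0\}$; hence $m = 0$, and since $m$ was arbitrary, $M = \{0\}$.

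The main obstacle will be the inductive fold-up via mixed associativity. Because $g^{\prime}$ is set-valued, one must work throughout with the membership relation rather than equality, and one must verify at each step that the $n-1$ scalar slots of $g^{\prime}$ really absorb the next $q_i$ into the single inner $g$-product rather than producing an extra layer of outer nesting. Commutativity of $g$ is exactly what allows the $1_A$ padding and the successive $q_i$'s to be shuffled into position so that axiom~(3) applies cleanly at every stage. Apart from this bookkeeping, the argument is a direct translation of the classical Nakayama lemma to the Krasner $(m,n)$-hypermodule setting.
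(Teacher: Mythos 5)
Your proposal is correct and follows essentially the same route as the paper: both deduce $g(Q^{(n)})=0$ from the contrapositive of Theorem~\ref{28}, iterate the hypothesis $M=g^{\prime}(Q,1_A^{(n-2)},M)$ via the mixed-associativity axiom until an $n$-fold $g$-product of elements of $Q$ sits in the scalar slot, and then invoke axiom~(4) to collapse everything to $\{0\}$. The only cosmetic difference is that you run the iteration elementwise (tracking a single $m\in M$ through successive memberships) whereas the paper performs the same telescoping at the level of sets.
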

\begin{proof}
Let  $Q$ be a strongly weakly $n$-ary $S$-primary hyperideal of $A$ but is not $n$-ary $S$-primary and $M=g^{\prime}(Q,1_A^{(n-2)},M)$ where $(M,f^{\prime},g^{\prime})$ is an $(m,n)$-hypermodule over $(A,f,g)$. By Theorem \ref{28},  we conclude that $g^{\prime}(g(Q^{(n)}),1_A^{(n-2)},M)=\{0\}$. On the other hand 
$g^{\prime}(g(Q^{(n)}),1_A^{(n-2)},M)=g^{\prime}(g(Q^{(n-1)},1_A),1_A^{(n-2)},g^{\prime}(Q,1_A^{(n-2)},M))=g^{\prime}(g(Q^{(n-1)},1_A),1_A^{(n-2)},M)=\cdots=g^{\prime}(Q,1_A^{(n-2)},g^{\prime}(Q,1_A^{(n-2)},M))=g(Q,1_A^{(n-2)},M)=M$. Then we get the result that $M=0$. 
\end{proof}



\begin{thebibliography}{99}


\bibitem{sorc1}
R. Ameri, M. Norouzi, Prime and primary hyperideals in Krasner $(m,n)$-hyperrings, {\it European Journal of Combinatorics}, (2013) 379-390.

\bibitem{amer2}
R. Ameri, A. Kordi, S. Sarka-Mayerova, Multiplicative hyperring of fractions and coprime hyperideals, {\it An. Stiint. Univ. Ovidius Constanta Ser. Mat.,} {\bf 25} (1) (2017) 5-23. 


\bibitem{mah2} 
M. Anbarloei, $n$-ary 2-absorbing and 2-absorbing primary hyperideals in Krasner $(m,n)$-hyperrings, {\it Matematicki Vesnik,} {\bf 71} (3) (2019) 250-262. 


\bibitem{mah3}
M. Anbarloei, Unifing the prime and primary hyperideals under one frame in a Krasner $(m,n)$-hyperring, {\it Comm. Algebra}, {\bf 49} (2021) 3432-3446.

\bibitem{mah4} 
M. Anbarloei, A study on a generalization of the n-ary prime hyperideals
in Krasner $(m,n)$-hyperrings, {\it Afrika Matematika,} {\bf 33} (2021) 1021-1032. 

\bibitem{mah5} 
M. Anbarloei, Krasner (m, n)-hyperring of fractions, {\it Jordan Journal of Mathematics and Statistic,} In press.


\bibitem{mah6} 
M. Anbarloei, Extension of $n$-ary prime hyperideals via an $n$-ary multiplicative subset  in a Krasner $(m,n)$-hyperring, {\it Filomat,} {\bf 37}(12) (2023) 3857-3869.

\bibitem{mah7}
M. Anbarloei, Weakly $S$-prime hyperideals, (2024) Submitted.

\bibitem{Anvariyeh} 
 S.M. Anvariyeh, S. Mirvakili, B. Davvaz, Fundamental relation on $(m, n)$-hypermodules over $(m,n)$-hyperrings, {\it Ars Combin.}, {\bf 94} (2010) 273-288.



\bibitem{asadi}
A. Asadi, R. Ameri, Direct limit of Krasner (m,n)-hyperrings, {\it Journal of Sciences}, {\bf 31} (1) (2020) 75-83.

\bibitem{Celikel}
E.Y. Celikel, H.A. Khashan, On weakly $S$-primary ideals of commutative rings, {\it Journal of Algebra and Its Applications,} https://doi.org/10.1142/S021949882450155X.



\bibitem{s7}
G. Crombez, On $(m, n)$- rings, {\it Abh. Math. Semin. Univ.,} Hamburg, {\bf 37} (1972) 180-199.

\bibitem{s8}
G. Crombez, J. Timm, On $(m, n)$-quotient rings, {\it Abh. Math. Semin. Univ.,} Hamburg, {\bf 37} (1972) 200-203. 

\bibitem{s2}
S. Corsini, Prolegomena of hypergroup theory, {\it Second edition, Aviani editor, Italy, } (1993). 

\bibitem{s3}
S. Corsini, V. Leoreanu, Applications of hyperstructure theory, {\it Advances in Mathematics}, vol. 5, Kluwer Academic Publishers, (2003). 

\bibitem{davvaz1}
B. Davvaz, V. Leoreanu-Fotea, Hyperring Theory and Applications, {\it International Academic Press, Palm Harbor, USA}, (2007).


\bibitem{s9}
B. Davvaz, T. Vougiouklis, n-ary hypergroups, {\it Iran. J. Sci. Technol.,} {\bf 30} (A2) (2006) 165-174. 

\bibitem{davvazz}
B. Davvaz, G. Ulucak, U. Tekir, Weakly $(k, n)$-absorbing (Primary) hyperideals of a Krasner $(m,n)$-hyperring, {\it Hacet. J. Math. Stat.,} {\bf 52}(5) (2023) 1229-1238.


\bibitem{bmb2}
Z. Dongsheng, $\delta$-primary ideals of commutative rings. {\it Kyungpook Mathematical Journal,} {\bf 41} (2001) 17-22. 

\bibitem{s6}
W. Dorente, Untersuchungen über einen verallgemeinerten Gruppenbegriff, {\it Math. Z.,} {\bf 29} (1928) 1-19. 

\bibitem{bmb3}
B. Fahid, Z. Dongsheng, 2-Absorbing $\delta$-primary ideals of commutative rings. {\it Kyungpook Mathematical Journal}, {\bf 57}
(2017) 193-198. 



\bibitem{rev2}
K. Hila, K. Naka, B. Davvaz, On $(k,n)$-absorbing
hyperideals in Krasner $(m,n)$-hyperrings, {\it Quarterly Journal of
Mathematics}, {\bf 69}
(2018) 1035-1046.


\bibitem{s5}
E. Kasner, An extension of the group concept (reported by L.G. Weld), {\it Bull. Amer. Math. Soc.,} {\bf 10} (1904) 290-291.



\bibitem{l1}
V. Leoreanu, Canonical n-ary hypergroups, {\it Ital. J. Pure Appl. Math.,} {\bf 24}(2008).

\bibitem{l2}
V. Leoreanu-Fotea, B. Davvaz, n-hypergroups and binary relations, {\it European J. Combin.,} {\bf 29} (2008) 1027-1218.

\bibitem{l3}
V. Leoreanu-Fotea, B. Davvaz, Roughness in n-ary hypergroups, {\it Inform. Sci.,} {\bf 178} (2008) 4114-4124. 


\bibitem{ma}
X. Ma, J. Zhan, B. Davvaz, Applications of rough soft sets to Krasner $(m,n)$-hyperrings and corresponding decision making methods, {\it Filomat}, {\bf 32} (2018) 6599-6614.


\bibitem{s1} 
F. Marty, Sur une generalization de la notion de groupe, {\it $8^{th}$ Congress Math. Scandenaves, Stockholm,} (1934) 45-49.

\bibitem{massaoud} 
E. Massaoud, $S$-primary ideals of a commutative ring, {\it Communications in Algebra} (2021) DOI: 10.1080/00927872.2021.1977939.


\bibitem{d1}
S. Mirvakili, B. Davvaz, Relations on Krasner $(m,n)$-hyperrings, {\it European J. Combin.,} {\bf 31}(2010) 790-802.

\bibitem{cons}
S. Mirvakili, B. Davvaz, Constructions of $(m,n)$-hyperrings, {\it Matematicki Vesnik,} {\bf 67} (1) (2015) 1-16.


\bibitem{nour}
M. Norouzi, R.Ameri, V. Leoreanu-Fotea, Normal hyperideals in Krasner $(m,n)$-hyperrings, {\it An. St. Univ. Ovidius Constanta} {\bf 26} (3) (2018) 197-211.

\bibitem{davvaz2}
S. Omidi, B. Davvaz, Contribution to study special kinds of hyperideals in ordered semihyperrings, {\it J. Taibah Univ. Sci.}, {\bf 11} (2017) 1083-1094.

\bibitem{rev1}
S. Ostadhadi-Dehkordi, B. Davvaz, A Note on
Isomorphism Theorems of Krasner $(m,n)$- hyperrings, {\it Arabian Journal of
Mathematics,} {\bf 5} (2016) 103-115.


\bibitem{bmb4}
E. Ozel Ay, G. Yesilot, D. Sonmez, $\delta$-Primary Hyperideals on Commutative Hyperrings, {\it Int. J. Math. and Mathematical Sciences,} (2017) Article ID 5428160, 4 pages. 

\bibitem{visweswaran}
S. Visweswaran, Some results on $S$-primary ideals of a commutative ring, {\it Beitr Algebra Geom} (2021) DOI: 10.1007/s13366-021-00580-5. 



\bibitem{s4}
T. Vougiouklis, Hyperstructures and their representations, {\it Hadronic Press Inc., Florida, }(1994).




\bibitem{s10}
M.M. Zahedi, R. Ameri, On the prime, primary and maximal subhypermodules, {\it Ital. J. Pure Appl. Math.,} {\bf 5} (1999) 61-80.

\bibitem{jian}
J. Zhan, B. Davvaz, K.P. Shum, Generalized fuzzy hyperideals of hyperrings, {\it Computers and Mathematics with Applications}, {\bf 56} (2008) 1732-1740.
\end{thebibliography}
\end{document}